\documentclass[reqno]{amsart}
\usepackage[foot]{amsaddr}
\usepackage{amssymb,amsmath,amsfonts,mathptmx,cite,mathrsfs,url}
\usepackage[mathscr]{eucal}

\DeclareSymbolFont{rsfscript}{OMS}{rsfs}{m}{n}
\DeclareSymbolFontAlphabet{\mathrsfs}{rsfscript}

\newtheorem{theorem}{Theorem}[section]
\newtheorem{proposition}[theorem]{Proposition}
\newtheorem{lemma}[theorem]{Lemma}
\newtheorem{fact}[theorem]{Fact}
\newtheorem{corollary}[theorem]{Corollary}

\theoremstyle{remark}
\newtheorem{remark}{Remark}

\newcommand{\ais}{ai-semi\-ring}

\newcommand{\mA}{\mathcal{A}}

\newcommand{\sgp}{semi\-group}

\newcommand{\qid}{quasi-iden\-ti\-ty}

\DeclareMathOperator{\var}{var}
\DeclareMathOperator{\qvar}{qvar}

\DeclareMathOperator{\ps}{ps}

\predisplaypenalty=0

\numberwithin{equation}{section}

\makeatletter

\renewcommand*\subjclass[2][2020]{\def\@subjclass{#2}\@ifundefined{subjclassname@#1}{\ClassWarning{\@classname}{Unknown edition (#1) of Mathematics Subject Classification; using '2020'.}}{\@xp\let\@xp\subjclassname\csname subjclassname@#1\endcsname}}

\renewcommand{\subjclassname}{\textup{2020} Mathematics Subject Classification}

\makeatother

\begin{document}

\title[Lattices of group quasivarieties and additively idempotent semiring varieties]{Embedding lattices of quasivarieties of periodic groups into lattices of additively idempotent semiring varieties: An algebraic proof}

\author[Miaomiao Ren]{Miaomiao Ren}
\address[Miaomiao Ren, Xianzhong Zhao]{{\normalfont School of Mathematics, Northwest University, Xi'an, Shaanxi, 710127, P.R. China}}
\email{miaomiaoren@yeah.net}
\author[Xianzhong Zhao]{Xianzhong Zhao}
\email{zhaoxz@nwu.edu.cn}
\email{m.v.volkov@urfu.ru}
\author[M. V. Volkov]{Mikhail V. Volkov}
\address[M. V. Volkov]{{\normalfont Ekaterinburg, Russia}}

\thanks{Miaomiao Ren is supported by National Natural Science Foundation of China (12371024)}

\begin{abstract}
A general result by Jackson (Flat algebras and the translation of universal Horn logic to equational logic, J. Symb. Log. 73(1) (2008) 90--128) implies that the lattice of all quasivarieties of groups of exponent dividing $n$ embeds into the lattice $L(\mathbf{Sr}_n)$ of all varieties of additively idempotent semirings whose multiplicative semigroups are unions of groups of exponent dividing $n$; the image of this embedding is an interval in $L(\mathbf{Sr}_n)$. We provide a new, direct, and purely algebraic proof of these facts and present a new identity basis for the top variety of the interval. In addition, we obtain new information about the lattice $L(\mathbf{Sr}_n)$, demonstrating that the properties of the lattice for $n\ge 3$ differ drastically from those previously known when $n=1$ or $2$.
\end{abstract}

\keywords{Additively idempotent semiring, Variety, Quasivariety, Flat extension, Semifield}

\subjclass{16Y60, 12K10, 08B15, 08C15, 20E10}

\maketitle

\section{Introduction}
\label{sec:introduction}

An \emph{additively idempotent semiring} (\ais, for short) is an algebra $(S;\,+,\cdot)$ with two binary operations, addition $+$ and multiplication $\cdot$, such that
\begin{itemize}
\item the additive reduct $(S;\,+)$ is a commutative idempotent semigroup, that is,
\[
s+(t+u)=(s+t)+u,\ \ s+t=t+s,\ \text{ and }\ s+s=s\ \text{ for all }\ s,t,u\in S;
\]
\item the multiplicative reduct $(S;\,\cdot)$ is a semigroup, that is,
\[
s(tu)=(st)u\ \text{ for all }\ s,t,u\in S;
\]
\item multiplication distributes over addition on the left and on the right, that is,
\[
s(t+u)=st+su\ \text{ and }\ (s+t)u=su+tu\ \text{ for all }\ s,t,u\in S.
\]
\end{itemize}

Ai-semirings naturally arise in various areas of mathematics (e.g., idempotent analysis, tropical geometry, and optimization) and computer science, where rings and other ``classical'' algebras fail to provide adequate approaches; see the contributions in the conference volumes \cite{Guna98,LiMa:2005} for diverse examples. The study of ai-semirings in their own right has developed into a rich theory, revealing many interesting phenomena.

Since \ais{}s are defined by identities, the class $\mathbf{AI}$ of all \ais{}s forms a variety, as do many subclasses of $\mathbf{AI}$ considered in the literature. This makes it promising to study \ais{}s within the framework of the theory of varieties, and indeed, the varietal approach to \ais{}s has proved fruitful. The main topics of research in this direction have been the finite axiomatizability question (aka the Finite Basis Problem) for \ais{}s and the description of certain sublattices in the lattice of all subvarieties of $\mathbf{AI}$.

The present paper focuses on the lattice $L(\mathbf{Sr}_n)$ of all subvarieties of the variety $\mathbf{Sr}_n$ consisting of all \ais{}s whose multiplicative semigroups are unions of groups of exponent dividing a fixed positive integer $n$. The lattices $L(\mathbf{Sr}_1)$ and $L(\mathbf{Sr}_2)$ were described in \cite{GPZ05,Pas05} and \cite{RZW17}, respectively, but the lattices $L(\mathbf{Sr}_n)$ with $n\ge 3$ have remained largely unexplored. Our main result, Theorem~\ref{thm:main}, identifies in $L(\mathbf{Sr}_n)$ an interval isomorphic to the lattice of all \emph{subquasivarieties} of the group variety $\mathbf{G}_n$ consisting of all groups of exponent dividing $n$. This, combined with known results about group quasivarieties, allows us to resolve several previously open questions concerning $L(\mathbf{Sr}_n)$. Specifically, we derive that the lattice $L(\mathbf{Sr}_n)$ has the cardinality of the continuum for $n\ge 3$ and is non-modular whenever $n$ is divisible by either the square of an odd prime or by $4p$, where $p$ is any prime.

The fact that the lattice of all subquasivarieties of $\mathbf{G}_n$ embeds into the lattice $L(\mathbf{Sr}_n)$ can be derived from a general result by Jackson \cite{Jackson08}, who discovered a method for translating universal Horn logic into equational logic. Jackson's powerful approach has found numerous applications in the study of varieties of algebras of various types, including \ais{}s; see \cite{Jackson19,JackRenZhao22,RJZL23} for semiring-specific examples. In particular, Theorem 2.1 in \cite{RJZL23}, which specializes \cite[Corollary 5.4, Theorem 5.12]{Jackson08}, captures a significant part of our Theorem~\ref{thm:main}. Nonetheless, our contribution offers two novel aspects. First, we provide direct and purely algebraic proofs that avoid the syntactic transformations of logical formulas which underlie Jackson's arguments. Second, we present a simple identity basis for the top variety in the image of the embedding under study. We discuss the relations between our approach and Jackson's in more detail in the last section of this paper.

The paper traces its origins to the third-named author's visit to Northwest University, Xi'an, during the fall of 2016. The authors' arXiv preprint \cite{RZV22} includes some results that are sharpened and further developed in the present paper.

\section{Preliminaries}
\label{sec:prelim}

We will require several concepts from universal algebra, as well as from group, semigroup, and lattice theory. Most of these are standard and can be found in the textbooks \cite{BuSa81,Clifford&Preston:1961,Gr:2011,Hall:1959,Howie:1995}. For the reader's convenience, we briefly review some basics here, setting up our notation along the way. We also recall a few known facts about \ais{}s that will be needed later.

\subsection{Identities and varieties}
We fix a countably infinite set $X$, consisting of letters such as $x,y,z$, with and without indices. The elements of $X$ are called \emph{variables}. A \emph{word} is a finite sequence of variables. (\emph{Semiring}) \emph{polynomials} are defined inductively departing from words: every word is a polynomial, and if $p_1$ and $p_2$ are polynomials, then so are the expressions $p_1+p_2$ and $p_1p_2$.

Any map $\varphi\colon X\to S$, where $(S;\,+,\cdot)$ is an \ais{} is called a \emph{substitution}. The \emph{value} $p\varphi$ of a polynomial $p$ under $\varphi$ is defined inductively as follows: if $p=x_1\cdots x_k$ is a word, where $x_1,\dots,x_k$ are variables, then $p\varphi:=x_1\varphi\cdots x_k\varphi$; if $p=p_1+p_2$ or $p=p_1p_2$ for some polynomials $p_1$ and $p_2$ whose values under $\varphi$ have already been defined, then $p\varphi:=p_1\varphi+p_2\varphi$ or $p\varphi:=p_1\varphi\cdot p_2\varphi$, respectively.

A (\emph{semiring}) \emph{identity} is an expression of the form $p=p'$, where $p$ and $p'$ are polynomials. An \ais{} $(S;\,+,\cdot)$ \emph{satisfies} $p=p'$ (or $p=p'$ \emph{holds} in $(S;\,+,\cdot)$) if $p\varphi=p'\varphi$ for every substitution $\varphi\colon X\to S$. That is, each substitution of elements from $S$ for the variables occurring in $p$ or $p'$ yields equal values for these polynomials.

An identity is said to be satisfied in a class of \ais{}s if it holds in every \ais{} in this class. The class of all \ais{}s that satisfy all identities from a given set $\Sigma$ is called the \emph{variety defined by $\Sigma$}.

The least variety containing a class $\mathbf{K}$ of \ais{}s is said to be \emph{generated} by $\mathbf{K}$ and is denoted by $\var\mathbf{K}$. In terms of identities, the variety $\var\mathbf{K}$ is defined by all identities that hold in $\mathbf{K}$; alternatively, $\var\mathbf{K}$ can be characterized as the class of homomorphic images of subsemirings of direct products of \ais{}s from $\mathbf{K}$ (the HSP Theorem; see \cite[Theorem II.9.5]{BuSa81}).

The varieties of \ais{}s form a complete lattice under class inclusion. For a family $\{\mathbf{V}_i\}_{i\in I}$ of varieties, their lattice meet is the intersection $\bigcap_{i\in I}\mathbf{V}_i$, and their lattice join is the variety generated by the union $\bigcup_{i\in I}\mathbf{V}_i$. The subvarieties of a given variety $\mathbf{V}$ form a complete sublattice, denoted by $L(\mathbf{V})$.

We will also encounter some group and semigroup varieties; these are classes of groups and semigroups defined by sets of group and semigroup identities, respectively.

\subsection{Quasi-identities and quasivarieties}
A (\emph{semigroup}) \emph{quasi-identity} is an expression of the form
\begin{equation}
\label{generic}
{u}_1=  {v}_1\ \&\ {u}_2=  {v}_2\ \&\ \cdots\ \&\ {u}_m=  {v}_m\longrightarrow {u}=  {v},
\end{equation}
where ${u}_1,{v}_1,{u}_2,{v}_2,\dots,{u}_m,{v}_m,{u},{v}$ are words. This definition allows $m=0$, in which case \eqref{generic} reduces to the identity ${u}={v}$.

A \sgp\ $(S;\,\cdot)$ \emph{satisfies} the \qid\ \eqref{generic} if each substitution $\varphi\colon X\to S$ yields ${u}\varphi={v}\varphi$, provided that ${u}_1\varphi={v}_1\varphi$, ${u}_2\varphi={v}_2\varphi$, \dots, ${u}_m\varphi={v}_m\varphi$.

The class of all semigroups that satisfy all quasi-identities from a given set $\Sigma$ is called the \emph{quasivariety defined by $\Sigma$}. Quasivarieties are characterized as the classes that contain the one-element semigroup and are closed under taking isomorphic copies of subsemigroups as well as under forming direct products and ultraproducts; see \cite[Theorem V.2.25]{BuSa81}. The least quasivariety containing a class $\mathbf{K}$ of semigroups is said to be \emph{generated} by $\mathbf{K}$ and is denoted by $\qvar\mathbf{K}$.

The quasivarieties of semigroups form a complete lattice under class inclusion, with meet and join operations defined  analogously to those in the lattice of varieties. For a quasivariety $\mathbf{Q}$, we denote the lattice of all its subquasivarieties by $L_q(\mathbf{Q})$.

\subsection{Clifford semigroups}
An \emph{idempotent} of a semigroup $(S;\,\cdot)$ is an element $e\in S$ such that $e^2=e$. We denote the set of all idempotents of $(S;\,\cdot)$ by $E(S)$.

We say that $(S;\,\cdot)$ has \emph{commuting idempotents} if $ef=fe$ for all $e,f\in E(S)$. A semigroup with commuting idempotents that is a union of its subgroups is called a \emph{Clifford} semigroup.

We will need two well-known properties of Clifford semigroups.
\begin{lemma}[{\cite[Lemma 4.8]{Clifford&Preston:1961}}]\label{lem:clifford}
If $(S;\,\cdot)$ is a Clifford semigroup, then $es=se$ for all $e\in E(S)$ and $s\in S$.
\end{lemma}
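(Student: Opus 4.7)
The plan is to reduce the claim to the single identity $ef=s^{-1}es$, where $f\in E(S)$ is the idempotent of the subgroup containing $s$ and $s^{-1}$ is the group-theoretic inverse of $s$ in that subgroup, so that $ss^{-1}=s^{-1}s=f$ and $sf=fs=s$. By the commuting-idempotents hypothesis, $ef=fe$, and hence $efe=e(ef)=e^2f=ef$. Given the identity $ef=s^{-1}es$, the lemma follows immediately by multiplying on the left by $s$: the left side becomes $sef=s(fe)=(sf)e=se$, while the right side becomes $(ss^{-1})es=fes=(fe)s=e(fs)=es$, yielding $se=es$.

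To establish $ef=s^{-1}es$, I intend to exploit the standard fact that in a regular semigroup with commuting idempotents---sometimes called an inverse semigroup---each element has a unique regular-semigroup inverse (i.e., there is a unique $b$ satisfying $aba=a$ and $bab=b$). First I would verify by a direct calculation that $s^{-1}e$ is a regular-semigroup inverse of $es$; the relations $(es)(s^{-1}e)(es)=es$ and $(s^{-1}e)(es)(s^{-1}e)=s^{-1}e$ follow from associativity together with the observations $ss^{-1}=s^{-1}s=f$, $efe=ef$, $fs=s$, and $s^{-1}f=s^{-1}$. Since $es$ lies in some subgroup of $S$ (as $S$ is a union of groups), its group inverse in that subgroup is likewise a regular-semigroup inverse of $es$, and uniqueness forces the two inverses to coincide: $s^{-1}e$ is the group inverse of $es$. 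Because an element of a subgroup commutes with its group inverse, we obtain $(es)(s^{-1}e)=(s^{-1}e)(es)$; evaluating the two sides gives $efe=ef$ on the left and $s^{-1}es$ on the right, producing the required identity.

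The main obstacle is the appeal to uniqueness of regular-semigroup inverses under the commuting-idempotents hypothesis. This is a classical result and can be proved quickly in situ: if $b$ and $c$ are both regular-semigroup inverses of $a$, then $ab$, $ac$, $ba$, $ca$ are all idempotents, so they pairwise commute by hypothesis. The identity $ab=ac$ then follows from $abac=(aba)c=ac$ and $acab=(aca)b=ab$ together with $ab\cdot ac=ac\cdot ab$, and similarly $ba=ca$; once these are in hand, $b=bab=b(ac)=(ba)c=(ca)c=cac=c$ completes the proof of uniqueness. All remaining steps in the proof of the lemma are routine symbolic manipulations.
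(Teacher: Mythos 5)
Your proof is correct. Note, however, that the paper does not prove this lemma at all: it is quoted verbatim from Clifford and Preston \cite[Lemma 4.8]{Clifford&Preston:1961}, so there is no in-paper argument to compare against. What you supply is a complete, self-contained derivation along classical lines: a Clifford semigroup is regular (every element lies in a subgroup) with commuting idempotents, hence regular-semigroup inverses are unique; identifying $s^{-1}e$ as the inverse of $es$ then yields $ef=(s^{-1}e)(es)=(es)(s^{-1}e)=efe$, from which $es=se$ follows by left multiplication by $s$. I checked the individual computations --- $(es)(s^{-1}e)(es)=es$, $(s^{-1}e)(es)(s^{-1}e)=s^{-1}ef=s^{-1}fe=s^{-1}e$, the in-situ uniqueness argument via the pairwise commuting idempotents $ab,ac,ba,ca$, and the final cancellation $sef=se$, $fes=es$ --- and all of them go through. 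This is essentially the standard textbook proof that in an inverse semigroup which is a union of groups the idempotents are central; including it would make the paper self-contained at the cost of a page of routine verification, which is presumably why the authors preferred the citation.
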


By a \emph{semilattice}, we mean a semigroup consisting of commuting idempotents. Let $(Y;\,\cdot)$ be a semilattice and $\{G_\alpha\}_{\alpha\in Y}$ a family of disjoint groups\footnote{Here and throughout, we denote a group and its underlying set by the same symbol, using $\cdot$ as the default notation for group multiplication.} indexed by the elements of $Y$. A semigroup $(S\,\cdot)$ is said to be a \emph{semilattice of groups} $G_\alpha$, $\alpha\in Y$, if:
\begin{itemize}
  \item $S=\bigcup_{\alpha\in Y}G_\alpha$;
  \item each $G_\alpha$ is a subgroup in $(S\,\cdot)$;
  \item for every $\alpha,\beta\in Y$ and every $a\in G_\alpha$, $b\in G_\beta$, the product $ab$ belongs to $G_{\alpha\beta}$.
\end{itemize}

\begin{lemma}[{\cite[part of Theorem 4.11]{Clifford&Preston:1961}}]\label{lem:clifthm}
A semigroup is Clifford if and only if it is a semilattice of groups.
\end{lemma}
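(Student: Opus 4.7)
The plan is to establish both implications separately, with the main work concentrated in the ``only if'' direction.

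\textit{If direction.} Assume $(S;\,\cdot)$ is a semilattice of groups $G_\alpha$, $\alpha\in Y$. Being a union of subgroups is immediate from the definition, so I need only show that idempotents commute. Since every idempotent of $S$ lies in a unique $G_\alpha$ and a group contains a unique idempotent, one has $E(S)=\{e_\alpha:\alpha\in Y\}$, where $e_\alpha$ denotes the identity of $G_\alpha$. I would prove $e_\alpha e_\beta=e_{\alpha\beta}$ by first showing $e_\beta e_{\alpha\beta}=e_{\alpha\beta}$: the element $y:=e_\beta e_{\alpha\beta}$ lies in $G_{\alpha\beta}$ and satisfies both $e_\beta\cdot y=y$ (by idempotence of $e_\beta$) and $e_{\alpha\beta}\cdot y=y$ (since $e_{\alpha\beta}$ is the identity of $G_{\alpha\beta}$), so right-cancelling by $y^{-1}$ inside $G_{\alpha\beta}$ yields $y=e_{\alpha\beta}$. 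Combined with the analogous identity $e_\alpha e_{\alpha\beta}=e_{\alpha\beta}$ and the observation $(e_\alpha e_\beta)\,e_{\alpha\beta}=e_\alpha e_\beta$, associativity gives $e_\alpha e_\beta=e_\alpha(e_\beta e_{\alpha\beta})=e_\alpha e_{\alpha\beta}=e_{\alpha\beta}$. By symmetry $e_\beta e_\alpha=e_{\alpha\beta}$, so idempotents commute.

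\textit{Only if direction.} Assume $(S;\,\cdot)$ is Clifford. Let $Y:=E(S)$ with the restricted multiplication; $Y$ is a semilattice because its elements are commuting idempotents. For each $e\in Y$, let $G_e$ be the maximal subgroup of $S$ with identity $e$; these are pairwise disjoint and exhaust $S$ by the union-of-groups hypothesis. The crucial step is $G_eG_f\subseteq G_{ef}$. For $a\in G_e$ and $b\in G_f$, repeated use of Lemma~\ref{lem:clifford} (every idempotent is central) yields
\[
(ab)(b^{-1}a^{-1})=(b^{-1}a^{-1})(ab)=ef\quad\text{and}\quad (ef)(ab)=(ab)(ef)=ab.
\]
Let $h$ be the unique idempotent with $ab\in G_h$. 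Right-multiplying $(ef)(ab)=ab$ by the $G_h$-inverse of $ab$ gives $(ef)h=h$, while left-multiplying $(ab)(b^{-1}a^{-1})=ef$ by $h$ gives $h(ef)=ef$. Because $h$ and $ef$ are commuting idempotents, these equalities force $h=ef$, and hence $ab\in G_{ef}$.

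The principal obstacle lies in the ``only if'' direction, where one must locate the maximal subgroup containing $ab$ without appealing in advance to the structure theorem that is being proved. The key idea is to exhibit $b^{-1}a^{-1}$ as a two-sided quasi-inverse of $ab$ relative to the idempotent $ef$; Lemma~\ref{lem:clifford} is precisely what allows the required rearrangements of products. Once this quasi-inverse is in hand, the identification $h=ef$ follows by a short interplay of left and right cancellations in the maximal subgroup containing $ab$.
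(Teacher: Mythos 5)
Your proof is correct. Note first that the paper itself offers no proof of this lemma: it is imported verbatim as part of Theorem 4.11 of Clifford and Preston, so there is no in-paper argument to compare against. What you have supplied is a self-contained and essentially the classical proof. The ``if'' direction correctly identifies $E(S)$ with the set of group identities $e_\alpha$ and derives $e_\alpha e_\beta=e_{\alpha\beta}$ by cancellation inside $G_{\alpha\beta}$, which gives commutativity of idempotents. The ``only if'' direction is also sound, and your use of Lemma~\ref{lem:clifford} is not circular, since that lemma is cited from an earlier result (Lemma 4.8 of the same source) and is stated independently in the paper. The computation $(ab)(b^{-1}a^{-1})=ef$, $(ef)(ab)=ab$, followed by the two-sided absorption identities $(ef)h=h$ and $h(ef)=ef$ for the idempotent $h$ of the maximal subgroup containing $ab$, correctly forces $h=ef$ because $h$ and $ef$ commute. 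The only places where you lean on unproved standard facts are: (a) that in a union of groups every element lies in a unique maximal subgroup, these being the $\mathscr{H}$-classes of the idempotents, which are pairwise disjoint and each contain exactly one idempotent; and (b) that $E(S)$ is closed under multiplication when idempotents commute (a one-line check: $(ef)^2=e(fe)f=e(ef)f=ef$). Both are routine, but stating (b) explicitly would be worthwhile since the idempotency of $ef$ is used at the final cancellation step.
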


\subsection{Flat extensions of groups}
Let $G$ be a group and let 0 be a symbol not occurring in $G$. The \emph{flat extension} of $G$ is the algebra $G^\flat:=(G\cup\{0\};\,+,\cdot)$ where addition is defined for $a,b\in G\cup\{0\}$ by
\begin{equation}
\label{eq:flat}
a+b:=\begin{cases}
0&\text{if } a\ne b,\\
a&\text{if } a=b,
\end{cases}
\end{equation}
and multiplication extends that of $G$ by preserving the original multiplication on elements of $G$  and defining all products involving 0 to be 0. It is known (and easy to verify) that the flat extension of any group is an \ais{}.

We call an algebra \emph{simple} if its only congruences are the diagonal (the equality relation) and the universal relation. Flat extensions of groups are known to be simple \ais{}s. As we were unable to find a convenient reference, we include a short proof of this fact.
\begin{lemma}\label{lem:flatsimple}
The flat extension of any group is a simple \ais.
\end{lemma}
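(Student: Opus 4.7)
\medskip

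\noindent\textbf{Proof proposal.} My plan is to take an arbitrary congruence $\theta$ on $G^\flat$ that is not the diagonal and show it must be the universal relation. Since $\theta$ is non-diagonal, there exist distinct elements $a,b\in G\cup\{0\}$ with $a\,\theta\,b$. I will handle two cases, then merge them.

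\medskip

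\noindent\emph{Step 1: reduce to the case $0\,\theta\,g$ for some $g\in G$.} If one of $a,b$ is already $0$, say $a=0$ and $b=g\in G$, we are done. Otherwise $a,b\in G$ with $a\ne b$; then by~\eqref{eq:flat} we have $a+b=0$, while $a+a=a$. Since $\theta$ respects addition, from $a\,\theta\,b$ I get
\[
a=a+a\ \theta\ a+b=0,
\]
so $0\,\theta\,a$, and Step~1 is complete with $g:=a$.

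\medskip

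\noindent\emph{Step 2: spread the relation to every element.} Fix $g\in G$ with $0\,\theta\,g$. For an arbitrary $h\in G$, consider the element $c:=g^{-1}h\in G$. Multiplying the pair $(0,g)$ on the right by $c$ and using that multiplication in $G^\flat$ extends that of $G$ and absorbs $0$, I obtain
\[
0=0\cdot c\ \theta\ g\cdot c=g(g^{-1}h)=h.
\]
Hence every $h\in G$ satisfies $0\,\theta\,h$. Since $\theta$ is an equivalence relation, it then identifies any two elements of $G\cup\{0\}$, so $\theta$ is the universal relation.

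\medskip

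I expect no real obstacle: the argument is a short two-step chase that uses addition once (to land on $0$) and multiplication once (to translate $g$ to an arbitrary group element). The only subtlety is to notice that both operations of the semiring must be invoked; using only the multiplicative structure would not suffice, since $G\cup\{0\}$ as a semigroup has the proper ideal $\{0\}$ and thus admits the non-trivial Rees congruence collapsing it. It is precisely the addition rule~\eqref{eq:flat}, which forces distinct group elements to sum to $0$, that rules out such semigroup congruences in the semiring setting.
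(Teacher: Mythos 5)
Your proposal is correct and follows essentially the same route as the paper's proof: first use the addition rule \eqref{eq:flat} to relate some group element to $0$, then use right multiplication within the group to propagate $0$ to every element of $G$. The only cosmetic difference is that you spell out the case split and the inverse $g^{-1}h$ explicitly, where the paper simply assumes $a\ne 0$ and invokes the existence of $d$ with $c=ad$.
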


\begin{proof}
Let $G$ be a group and $\rho$ a non-diagonal congruence on $G^\flat$. Take $a\ne b$ such that $a\mathrel{\rho}b$. We may assume $a\ne 0$. Since $\rho$ respects addition, adding $a$ to both sides of $a\mathrel{\rho}b$ yields $(a+a)\mathrel{\rho}(a+b)$. By \eqref{eq:flat}, we have $a+a=a$ and $a+b=0$, so $a\mathrel{\rho}0$. As $G$ is a group, for any $c\in G$, there exists $d\in G$ with $c=ad$. Since $\rho$ respects multiplication, multiplying both sides of $a\mathrel{\rho}0$ on the right by $d$ yields $c=ad\mathrel{\rho}0\cdot d=0$. Hence, every $c\in G$ is $\rho$-related to 0, so $\rho$ is the universal relation.
\end{proof}

\subsection{Notational conventions} 
Some computations throughout this paper involve expressions of the form $uv^{n-1}$ where $u$ and $v$ are either words or elements of an \ais{} and $n$ is a positive integer. To ensure such expressions remain meaningful even when $n=1$, we adopt the convention $uv^0:=u$.

We will often work with expressions of the form $w+w^n$, where $w$ is either a word or an element of an \ais{}. To streamline notation, we write $\mathrm{M}(w)$ to denote $w+w^n$, regardless of the interpretation of $w$. In particular, $\mathrm{M}(x)$ stands for the polynomial $x+x^n$.

\section{Some properties of the variety $\mathbf{Sr}_n$}
\label{sec:srn}
A group $G$ with identity element $e$ is said to have exponent $n$ if $n$ is the smallest positive integer such that $g^n=e$ for every $g\in G$. For any fixed $n$, the class $\mathbf{G}_n$ of all groups of exponent dividing $n$ is a semigroup variety that can be defined by the identities
\[
x^ny=yx^n=y.
\]
The class $\mathbf{Sr}_n$ of all \ais{}s whose multiplicative reducts are unions of groups from $\mathbf{G}_n$ is a variety that can be defined by the single identity
\begin{equation}
\label{eq:srn} x=  x^{n+1}.
\end{equation}

Structural properties of \ais{}s in the variety $\mathbf{Sr}_n$ were studied in \cite{RZS16}\footnote{To avoid possible confusion, notice that the variety denoted by $\mathbf{Sr}_n$ here corresponds to $\mathbf{Sr}(n+1,1)$ in the notation of \cite{RZS16}.}. We will need two properties concerning idempotents in such \ais{}s.

\begin{lemma}[{\!\cite[Lemma 2.1]{RZS16}}]\label{lem:e(s)}
If $(S;\,+,\cdot)$ is an \ais{} in the variety $\mathbf{Sr}_n$, then $E(S)=\{s^{n} \mid s \in S\}$ and $(E(S);\,+,\cdot)$ is a subsemiring of $(S;\,+,\cdot)$.
\end{lemma}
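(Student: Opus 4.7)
The plan is to prove the two parts of the lemma separately. For the equality $E(S) = \{s^n : s \in S\}$, I would argue directly from the defining identity $s^{n+1} = s$. The inclusion $\supseteq$ follows from $(s^n)^2 = s^{2n} = s^{n+1} \cdot s^{n-1} = s \cdot s^{n-1} = s^n$, so every $s^n$ is multiplicatively idempotent (the convention $s \cdot s^0 = s$ covers the boundary case $n=1$). For the reverse inclusion, any $e \in E(S)$ satisfies $e^k = e$ for all $k \geq 1$ by induction from $e^2 = e$; in particular $e = e^n$.

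Closure of $E(S)$ under $+$ would come from expanding the instance $(e+f)^{n+1} = e+f$ of the defining identity. By distributivity the left-hand side is the sum of all length-$(n+1)$ products in the letters $e, f$; since $e^2 = e$ and $f^2 = f$, each such product reduces to an alternating word in $e, f$ of length at most $n+1$, and every alternating word of length in $\{1,\ldots,n+1\}$ does arise (by padding with repetitions). Additive idempotence then collapses the expansion to the sum of all alternating words of length $\leq n+1$. Comparing with $e + f$, each alternating word $w$ must satisfy $w \leq e + f$ in the natural order $a \leq b \iff a + b = b$. In particular $ef, fe \leq e + f$, whence $(e+f)^2 = e + ef + fe + f = e + f$, so $e + f \in E(S)$.

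For closure under $\cdot$, the heart of the argument is the inequality $(ef)^2 \leq ef$, obtained by two carefully chosen monotone multiplications. Starting from $fe \leq e + f$ (established above), left-multiplication by $e$ together with $e^2 = e$ gives $efe = e \cdot fe \leq e(e+f) = e + ef$; then right-multiplication by $f$ together with $f^2 = f$ and additive idempotence gives $(ef)^2 = efe \cdot f \leq (e + ef)f = ef + ef = ef$. Iteratively right-multiplying $(ef)^2 \leq ef$ by $ef$ yields $(ef)^{k+1} \leq (ef)^k$ for every $k \geq 1$, so $(ef)^n \leq ef$; on the other hand, the identity $(ef)^{n+1} = ef$ supplies $ef = (ef)^{n+1} \leq (ef)^n$. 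Hence $ef = (ef)^n$, which lies in $E(S)$ by the first part. The main obstacle I anticipate is recognizing that one must left-multiply by $e$ before right-multiplying by $f$, so that $e^2 = e$ and $f^2 = f$ trigger in just the right order to collapse $(ef)^2$ into $ef$; a direct comparison using only the expansion of $(e+f)^{n+1}$ does not seem to suffice.
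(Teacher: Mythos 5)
Your proposal is correct. Note that the paper itself offers no proof of this lemma: it is imported verbatim as \cite[Lemma 2.1]{RZS16}, so there is no internal argument to compare against. Your derivation is a valid self-contained replacement. The first part ($E(S)=\{s^n\mid s\in S\}$) is the routine computation one would expect. For additive closure, your expansion of $(e+f)^{n+1}$ into alternating words, followed by reading off $ef,fe\le e+f$ in the natural semilattice order $a\le b\iff a+b=b$, is sound: the padding argument does produce every alternating word of length at most $n+1$, additive idempotency collapses repetitions, and each summand of an idempotent sum is below the sum. For multiplicative closure, the two monotone multiplications $efe\le e+ef$ and then $(ef)^2\le ef$ are legitimate (both one-sided multiplications preserve $\le$ by distributivity), and combining the descending chain $(ef)^{n+1}\le(ef)^n\le\dots\le ef$ with $ef=(ef)^{n+1}$ and antisymmetry gives $ef=(ef)^n\in E(S)$. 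It is worth remarking that your technique --- expanding a power of a sum via \eqref{eq:srn} and exploiting the natural order --- is exactly the style of argument the paper uses elsewhere (e.g.\ in Lemmas~\ref{lem:m} and~\ref{lem:3iden}), so your proof fits the paper's toolkit even though the authors chose to cite the result rather than prove it.
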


The next fact is an equivalent of \cite[Theorem 2.5]{RZS16}.
\begin{lemma}\label{lem:cryptic}
For every \ais{} $(S;\,+,\cdot)$ in the variety $\mathbf{Sr}_n$, the map $S\to E(S)$ defined by $s\mapsto s^n$ is an onto homomorphism of multiplicative semigroups.
\end{lemma}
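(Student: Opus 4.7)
The plan has three steps: well-definedness, surjectivity, and multiplicative preservation of $\varphi\colon s\mapsto s^n$. The first two are immediate from Lemma \ref{lem:e(s)}, which gives $s^n\in E(S)$ for every $s\in S$ and the equality $E(S)=\{s^n\mid s\in S\}$.

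The heart of the proof is the identity $(st)^n=s^nt^n$ for all $s,t\in S$. Both sides lie in $E(S)$: the left as an $n$-th power, the right because $E(S)$ is closed under multiplication (Lemma \ref{lem:e(s)}). Since $x=x^{n+1}$ makes $(S,\cdot)$ a union of groups with $s^n$ serving as the identity of the unique maximal subgroup containing $s$, the element $(st)^n$ is the identity of the maximal subgroup containing $st$. The plan is to show that $s^nt^n$ is also a two-sided identity for $st$, i.e.\ $(st)(s^nt^n)=(s^nt^n)(st)=st$; uniqueness of the subgroup identity then forces $s^nt^n=(st)^n$. Expanding $(st)(s^nt^n)=s(ts^n)t^n$ and using that the idempotent $s^n$ commutes with $t$, this simplifies to $ss^n\cdot tt^n=s^{n+1}t^{n+1}=st$, and analogously on the other side. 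Hence the whole argument rests on showing that every idempotent of $(S,\cdot)$ commutes with every element of $S$, i.e.\ that $(S,\cdot)$ is a Clifford semigroup in the sense of Lemmas \ref{lem:clifford} and \ref{lem:clifthm}.

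The main obstacle is establishing this Clifford property from the ai-semiring axioms. My plan here is to expand $(s+e)^{n+1}=s+e$ for an arbitrary $s\in S$ and idempotent $e\in E(S)$, use distributivity to isolate the mixed monomials, and then invoke the sub-semilattice structure of $(E(S),+)$ (Lemma \ref{lem:e(s)}) together with the absorption identity $(e+f)^2=e+f$ for $e,f\in E(S)$ to extract $es=se$. The interaction between the multiplicative identity $x=x^{n+1}$ and the additive semilattice structure is essential here, and it is what distinguishes ai-semirings from mere periodic semigroups; with the Clifford property in hand, the multiplicative homomorphism property of $\varphi$ follows as outlined.
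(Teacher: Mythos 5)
There is a genuine gap, and it is fatal to the route you chose: the Clifford property on which your whole argument rests is \emph{false} for the variety $\mathbf{Sr}_n$. Consider the two-element ai-semiring $\{a,b\}$ with left-zero multiplication ($xy=x$) and semilattice addition $a+b=b$; it satisfies \eqref{eq:srn} for every $n$ (since $x^2=x$), so it lies in $\mathbf{Sr}_n$, yet its two idempotents do not commute ($ab=a\ne b=ba$). Consequently, no expansion of $(s+e)^{n+1}=s+e$ combined with the semilattice structure of $(E(S);+)$ and the identity $(e+f)^2=e+f$ can yield $es=se$: all of those facts hold in this example while $es\ne se$, so they provably do not entail commutation. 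Centrality of idempotents genuinely requires the extra identity \eqref{new3} --- this is precisely how the paper obtains it in Lemma~\ref{lem:3iden}, via $x^ny^n=x^n+y^n=y^n+x^n=y^nx^n$ --- but that argument is confined to the subvariety $\mathbf{M}_n$, not available in all of $\mathbf{Sr}_n$. The identity $(xy)^n=x^ny^n$ does hold throughout $\mathbf{Sr}_n$ (the paper does not prove it but imports it from \cite[Theorem~2.5]{RZS16}); it is strictly weaker than the Clifford property --- left-zero bands satisfy it --- and so must be derived by a computation that does not pass through centrality of idempotents.

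A secondary flaw: even where the Clifford property is available, the inference ``$s^nt^n$ is an idempotent acting as a two-sided identity on $st$, hence equals $(st)^n$'' is invalid as stated. In a semilattice of groups, any idempotent above $(st)^n$ in the natural order also fixes $st$ on both sides (for instance, the identity of a monoid that is a semilattice of groups fixes every element without being the idempotent of its $\mathcal{H}$-class). The correct conclusion in the Clifford case is instead that $s^nt^n$ is an idempotent lying in the same maximal subgroup $G_{\alpha\beta}$ as $st$, and a group has a unique idempotent. This repair is easy, but it does not rescue the argument for general members of $\mathbf{Sr}_n$, where the semilattice-of-groups decomposition is unavailable.
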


In terms of identities, Lemma~\ref{lem:cryptic} amounts to saying that $\mathbf{Sr}_n$ satisfies the identity
\begin{equation}
\label{eq:cryptic} (xy)^n=  x^{n}y^n.
\end{equation}

The following lemma records several useful identities in the variety $\mathbf{Sr}_n$ involving the polynomial $\mathrm{M}(x):=x + x^n$.
\begin{lemma}\label{lem:m}
The variety $\mathbf{Sr}_n$ satisfies the identities
\begin{equation}\label{21701}
\mathrm{M}(x)=x\mathrm{M}(x)=\mathrm{M}(x)x=(\mathrm{M}(x))^2.
\end{equation}
\end{lemma}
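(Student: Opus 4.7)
The plan is to prove the stronger reformulation
\[
\mathrm{M}(x) \;=\; x + x^2 + \cdots + x^n
\]
holds as an identity in $\mathbf{Sr}_n$; once this is in hand, all four equalities in \eqref{21701} reduce to one-line calculations. The key observation is that \eqref{eq:srn} holds for \emph{every} element of an $\ais$ in $\mathbf{Sr}_n$, not only for the generator $x$; substituting $y = \mathrm{M}(x)$ into the identity $y = y^{n+1}$ gives $\mathrm{M}(x) = (\mathrm{M}(x))^{n+1}$, so it suffices to expand and simplify the right-hand side.

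I would carry out the expansion by induction on $k \in \{1,\dots,n\}$, showing
\[
(\mathrm{M}(x))^k \;=\; x + x^2 + \cdots + x^k + x^n.
\]
The inductive step multiplies $(\mathrm{M}(x))^k$ by $x + x^n$, applies distributivity, reduces each $x^{n+j}$ to $x^j$ for $j \geq 1$ (a consequence of \eqref{eq:srn}) and $x^{2n}$ to $x^n$, and then collapses repetitions via additive idempotence. Taking $k = n$ the two copies of $x^n$ merge, yielding $(\mathrm{M}(x))^n = \sum_{k=1}^n x^k$; one further multiplication by $\mathrm{M}(x)$ produces $\sum_{k=1}^n (x^{k+1} + x^{k+n})$, and each of these exponents again reduces to a member of $\{1,\dots,n\}$, so $(\mathrm{M}(x))^{n+1}$ equals the same sum. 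Combined with $\mathrm{M}(x) = (\mathrm{M}(x))^{n+1}$, this establishes the reformulation.

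With the reformulation in hand, the remainder is cosmetic. Multiplying $\sum_{k=1}^n x^k$ by $x$ on either side cyclically permutes the exponents $\{1,\dots,n\}$ (using $x \cdot x^n = x^{n+1} = x$), so both $x\mathrm{M}(x)$ and $\mathrm{M}(x)x$ equal $\mathrm{M}(x)$; and the double sum $(\mathrm{M}(x))^2 = \sum_{j,k=1}^n x^{j+k}$, after reducing every exponent modulo $n$, once more runs over $\{1,\dots,n\}$, giving $(\mathrm{M}(x))^2 = \mathrm{M}(x)$. The only real work is the bookkeeping in the inductive step, where one must track many cross-terms and consistently apply the reductions $x^{n+j}=x^j$ together with additive idempotence; no ingredient beyond the defining identity \eqref{eq:srn} and the basic $\ais$ axioms is used.
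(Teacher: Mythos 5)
Your proposal is correct and takes essentially the same route as the paper: both establish the reformulation $\mathrm{M}(x)=x+x^2+\cdots+x^n$ by applying \eqref{eq:srn} to the element $\mathrm{M}(x)$ itself and expanding $(x+x^n)^{n+1}$, then read off the four identities from that expression. The only difference is organizational---you carry out the expansion by induction on the power $k$, whereas the paper does a single binomial-style expansion of $(x+e)^{n+1}$ with $e:=x^n$, using $ex=xe=x$ and $e^2=e$---and both versions are sound.
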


\begin{proof}
Let $e:=x^n$ so that $\mathrm{M}(x)=x+e$. By \eqref{eq:srn}, we have $ex=xe=x^{n+1}=x$, $e^2=e$, and $x+e=(x+e)^{n+1}$. Expanding the binomial $(x+e)^{n+1}$ and using $ex=xe=x$ and $e^2=e$, along with commutativity and idempotency of addition, we get
\[
(x+e)^{n+1}=x^{n+1}+x^ne+x^{n-1}e+\cdots+xe+e\stackrel{\eqref{eq:srn}}=x+x^2+\cdots+x^n.
\]
Hence, we have $\mathrm{M}(x) = x+x^2+\cdots+x^n$. Using this expression for $\mathrm{M}(x)$, we deduce
\begin{gather*}
\mathrm{M}(x)x=x\mathrm{M}(x)=x^2+x^3+\cdots+x^{n+1}\stackrel{\eqref{eq:srn}}=x+x^2+\cdots+x^n=\mathrm{M}(x)\\
\intertext{and}
\mathrm{M}(x)^2=\mathrm{M}(x)(x+x^2+\cdots+x^n)=\mathrm{M}(x)x+\mathrm{M}(x)x^2+\cdots+\mathrm{M}(x)x^n=\mathrm{M}(x),
\end{gather*}
thus establishing all identities in \eqref{21701}.
\end{proof}

For a binary relation $\theta$ on a set $S$ and a subset $T\subseteq S$, let $\theta|_T$ stand for the \emph{restriction} of $\theta$ to $T$, that is, $\theta|_T:=\theta\cap(T\times T)$.
\begin{lemma}\label{lem:restriction}
Let $\theta$ be a congruence on an \ais{} $(S;\,+,\cdot)$ from the variety $\mathbf{Sr}_n$. If $\theta|_{E(S)}$ is the diagonal of $E(S)$, then $\theta$ is the diagonal of $S$.
\end{lemma}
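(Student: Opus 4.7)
The plan is to show $s \mathrel{\theta} t \Rightarrow s = t$, reducing the general case to one where one of the related elements is idempotent, and then handling the reduced case via the identities in Lemma~\ref{lem:m}. Starting from $s \mathrel{\theta} t$: since $\theta$ respects multiplication, $s^n \mathrel{\theta} t^n$, and by Lemma~\ref{lem:e(s)} both lie in $E(S)$, so the hypothesis forces $s^n = t^n$; denote this common value by $e$. Now set $g := ts^{n-1}$. Multiplying $s \mathrel{\theta} t$ on the right by $s^{n-1}$ gives $s^n \mathrel{\theta} ts^{n-1}$, i.e., $e \mathrel{\theta} g$. If we can show $g = e$, i.e., $ts^{n-1} = s^n$, then multiplying by $s$ on the right yields $ts^n = s^{n+1} = s$ by \eqref{eq:srn}; but also $ts^n = tt^n = t^{n+1} = t$ (using $s^n = t^n$), so $t = s$.

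The heart of the proof is the claim: if $g \mathrel{\theta} e$ with $e \in E(S)$, then $g = e$. Applying the $n$-th power argument once more, $g^n \mathrel{\theta} e^n = e$ with $g^n \in E(S)$ gives $g^n = e$. Adding $g$ to both sides of $g \mathrel{\theta} e$ and using additive idempotency yields $g \mathrel{\theta} g + e = g + g^n = \mathrm{M}(g)$, whence transitivity gives $\mathrm{M}(g) \mathrel{\theta} e$. Since $(\mathrm{M}(x))^2 = \mathrm{M}(x)$ by Lemma~\ref{lem:m}, $\mathrm{M}(g) \in E(S)$, so the hypothesis on $\theta|_{E(S)}$ forces $\mathrm{M}(g) = e$. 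Finally, $x\mathrm{M}(x) = \mathrm{M}(x)$ from the same lemma gives $g \cdot \mathrm{M}(g) = \mathrm{M}(g)$, i.e., $g \cdot e = e$; but $g \cdot e = g \cdot g^n = g^{n+1} = g$ by \eqref{eq:srn}, and hence $g = e$.

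The main obstacle I anticipate is the initial reduction: the choice of $g = ts^{n-1}$ is what translates the general $\theta$-relation $s \mathrel{\theta} t$ into a relation of the form $g \mathrel{\theta} e$ with $e$ idempotent. This implicitly uses the Clifford structure on $(S;\,\cdot)$, where $s^{n-1}$ serves as the group-theoretic inverse of $s$ in its maximal subgroup. Once the reduction is in place, the core step is a clean interplay between the two identities of Lemma~\ref{lem:m}: $(\mathrm{M}(x))^2 = \mathrm{M}(x)$ pins down $\mathrm{M}(g) = e$ via the hypothesis, and $x\mathrm{M}(x) = \mathrm{M}(x)$ then collapses $g$ to $e$.
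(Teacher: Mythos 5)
Your proposal is correct and follows essentially the same route as the paper's proof: reduce to $s^n=t^n$, pass to the auxiliary element $ts^{n-1}$ (the paper uses $st^{n-1}$, which is symmetric), use the hypothesis on $\theta|_{E(S)}$ together with the idempotency of $\mathrm{M}(\,\cdot\,)$ from Lemma~\ref{lem:m} to force $\mathrm{M}(g)=g^n$, and then collapse $g$ to $g^n$ via $g\mathrm{M}(g)=\mathrm{M}(g)$. The only cosmetic differences are that you obtain $g^n=e$ from the congruence hypothesis rather than from identity \eqref{eq:cryptic}, and you reach $\mathrm{M}(g)\mathrel{\theta}e$ by adding $g$ and invoking transitivity instead of adding $g^n$ directly.
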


\begin{proof}
We aim to show that $a,b\in S$ with $a\mathrel{\theta}b$ must be equal. If $a\mathrel{\theta}b$, then $a^n\mathrel{\theta}b^n$. By Lemma~\ref{lem:e(s)}, $a^n,b^n\in E(S)$, and since $\theta|_{E(S)}$ is the diagonal of $E(S)$, we have $a^n=b^n$.

Let $c:=ab^{n-1}$. We have $c^n=(ab^{n-1})^n\stackrel{\eqref{eq:cryptic}}=a^nb^{(n-1)n}=(b^n)^n=b^n$. Since $a\mathrel{\theta}b$, we also have $c=ab^{n-1}\mathrel{\theta}b^n=c^n$. Adding $c^n$ to both sides of $c\mathrel{\theta}c^n$, we obtain $(c+c^n)\mathrel{\theta}c^n$. By Lemma~\ref{lem:m}, the element $\mathrm{M}(c)=c+c^n$ is an idempotent, and so is $c^n$ by Lemma~\ref{lem:e(s)}. Therefore, $\mathrm{M}(c)=c^n$ since $\theta|_{E(S)}$ is the diagonal of $E(S)$. Using $\mathrm{M}(c)=c^n$, we now compute:
\[
c\stackrel{\eqref{eq:srn}}=cc^n=c\mathrm{M}(c)\stackrel{\eqref{21701}}=\mathrm{M}(c)=c^n.
\]
Thus, $ab^{n-1}=c=c^n=b^n$. Multiplying both sides of $ab^{n-1}=b^n$ on the right by $b$ and using the already established equality $a^n=b^n$, we obtain $a\stackrel{\eqref{eq:srn}}=aa^n=ab^n=b^{n+1}\stackrel{\eqref{eq:srn}}=b$.
\end{proof}

\section{Variety $\mathbf{M}_n$ and its subdirectly irreducible members}
\label{sec:mn}

Let $\mathbf{M}_n$ be the subvariety of the variety $\mathbf{Sr}_n$ defined within $\mathbf{Sr}_n$ by the identity
\begin{equation}
\label{new3}
x^n+y^n=x^ny^n.
\end{equation}
To clarify the meaning of \eqref{new3}, recall that $E(S)=\{s^{n} \mid s \in S\}$ for every \ais{} $(S;\,+,\cdot)$ in $\mathbf{Sr}_n$ (Lemma~\ref{lem:e(s)}). Hence, an \ais{} $(S;\,+,\cdot)$ satisfies \eqref{new3} if and only if the subsemiring $(E(S);\,+,\cdot)$ satisfies the identity
\begin{equation}\label{eq:dupl}
x+y=xy.
\end{equation}
The identity \eqref{eq:dupl} means that addition and multiplication in $(E(S);\,+,\cdot)$ coincide. Thus, the \ais{} $(E(S);\,+,\cdot)$ is a \emph{duplicated semilattice}\footnote{We prefer this name over the somewhat oxymoronic term \emph{monobisemilattice} used, e.g., in \cite{PZ00}.}, that is, a semilattice, where a single operation is treated as both addition and multiplication. Thus, $\mathbf{M}_n$ is the variety of all \ais{}s from $\mathbf{Sr}_n$ whose subsemirings of idempotents are duplicated semilattices. Note that the variety of all duplicated semilattices is nothing but $\mathbf{M}_1$ since for $n=1$ identity \eqref{eq:srn} is a consequence of \eqref{new3}.

In this section, we characterize the subdirectly irreducible members of $\mathbf{M}_n$.

We begin by recording two useful identities that hold in $\mathbf{M}_n$. 
\begin{lemma}\label{lem:3iden}
Each of the following identities holds in the variety $\mathbf{M}_n$.
\begin{gather}
xy^n   =  y^nx;    \label{eq:center}\\
x+y    =  \mathrm{M}(xy^{n-1})y.\label{eq:summing}
\end{gather}
\end{lemma}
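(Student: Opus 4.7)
I would prove the two identities in sequence.

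For \eqref{eq:center}: The key observation is that in $\mathbf{M}_n$ the multiplicative reduct of every \ais{} is a Clifford semigroup. Every \ais{} in $\mathbf{Sr}_n$ satisfies $x = x^{n+1}$, so its multiplicative reduct is automatically a union of groups (with $x^n$ serving as the identity of the maximal subgroup containing $x$). The commuting-idempotents condition needed for the Clifford hypothesis follows from \eqref{new3}: by Lemma~\ref{lem:e(s)}, $E(S) = \{s^n : s \in S\}$, and \eqref{new3} amounts to saying that $+$ and $\cdot$ agree on $E(S)$; since $+$ is commutative, so then is $\cdot$ on $E(S)$. As $y^n \in E(S)$, Lemma~\ref{lem:clifford} applied to $y^n$ yields $xy^n = y^n x$.

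For \eqref{eq:summing}: A short direct calculation using \eqref{eq:cryptic}, the idempotence of $y^n$ (which gives $(y^{n-1})^n = y^{n(n-1)} = y^n$ for $n\ge 2$, with $n=1$ handled by the exponent convention), and \eqref{eq:srn} shows $\mathrm{M}(xy^{n-1})\,y = xy^n + x^n y$. So it suffices to prove $x + y = xy^n + x^n y$, and I would establish this via two opposite inequalities in the additive semilattice of $S$. For the inequality $xy^n + x^n y \le x + y$ (which in fact holds already in $\mathbf{Sr}_n$), apply \eqref{eq:srn} to $x+y$ to get $(x+y)^{n+1} = x+y$; expanding $(x+y)^{n+1}$ by repeated distributivity yields the sum of all words of length $n+1$ in $\{x,y\}$, among which $xy^n$ and $x^n y$ both occur, so additive idempotence forces $xy^n + (x+y) = x+y$ and $x^n y + (x+y) = x+y$. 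For the reverse inequality $x+y \le xy^n + x^n y$ (here \eqref{new3} enters substantively), multiply the defining identity $x^n + y^n = x^n y^n$ on the left by $x$ and on the right by $y$, and use $x^{n+1}=x$, $y^{n+1}=y$, to obtain $x + xy^n = xy^n$ and $x^n y + y = x^n y$; these say $x \le xy^n$ and $y \le x^n y$, giving $x+y \le xy^n + x^n y$. Combining the two inequalities yields the desired equality.

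The main obstacle, in my view, is spotting the clean ``squeezing'' argument: the easy $\le$-direction that comes purely from \eqref{eq:srn} via distributive expansion is easy to overlook, whereas the $\ge$-direction is the substantive use of \eqref{new3}. Once both are in hand, the rest is routine identity manipulation together with Lemma~\ref{lem:clifford}.
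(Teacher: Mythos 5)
Your proposal is correct and follows essentially the same route as the paper: identity \eqref{eq:center} is obtained exactly as in the paper (idempotents are the $n$th powers by Lemma~\ref{lem:e(s)}, identity \eqref{new3} makes them commute, so the multiplicative reducts are Clifford and Lemma~\ref{lem:clifford} applies), and for \eqref{eq:summing} you reduce to the same key intermediate identity $x+y=xy^n+x^ny$ and derive it from the same two ingredients, namely absorbing the summands $xy^n$ and $x^ny$ into the expansion of $(x+y)^{n+1}$ and multiplying \eqref{new3} on the left by $x$ and on the right by $y$. The only difference is presentational: the paper strings these steps into a single equational chain, whereas you package them as two inequalities in the additive semilattice order and conclude by antisymmetry.
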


\begin{proof}
First, observe that the variety $\mathbf{M}_n$ satisfies the identity $x^ny^n=  y^nx^n$. Indeed, we have
\[x^ny^n \stackrel{\eqref{new3}}=  x^n+y^n =  y^n+x^n\stackrel{\eqref{new3}} =  y^nx^n.\]
In view of Lemma~\ref{lem:e(s)}, in each \ais{} in $\mathbf{Sr}_n$, and in particular in $\mathbf{M}_n$, an element is idempotent if and only if it is an $n$th power. Therefore, the identity $x^ny^n=y^nx^n$ expresses  that the idempotents in the \ais{}s of $\mathbf{M}_n$ commute. Hence, \ais{}s in $\mathbf{M}_n$ have Clifford multiplicative semigroups. By Lemma~\ref{lem:clifford}, each idempotent in a Clifford semigroup commutes with every element. Using again that the idempotents in \ais{}s in $\mathbf{M}_n$ are exactly the $n$th powers, we conclude that $\mathbf{M}_n$ satisfies identity \eqref{eq:center}.

To prove \eqref{eq:summing}, we first show that $\mathbf{M}_n$ satisfies the identity
\begin{equation}\label{eq:new1}
x+y    =  xy^n+x^ny.
\end{equation}
We establish this through the following computation:
\begin{align*}
x+y
&=  (x+y)^{n+1}              &&\text{by \eqref{eq:srn}}  \\
&=  (x+y)^{n+1} + xy^n+x^ny  &&\text{since the words $xy^n$ and $x^ny$ occur as summands} \\[-1ex]
&                                 &&\text{in the expansion of the binomial $(x+y)^{n+1}$}\\
&=  x+y+xy^n+x^ny            &&\text{by \eqref{eq:srn}} \\
&=  x^{n+1}+y^{n+1}+xy^n+x^ny&&\text{by \eqref{eq:srn}}\\
&=  x(x^n+y^n)+(x^n+y^n)y    &&\\
&=  xx^ny^n+x^ny^ny          &&\text{by \eqref{new3}}\\
&=  xy^n+x^ny                &&\text{by \eqref{eq:srn}.}
\end{align*}
Now we compute:
\begin{align*}
\mathrm{M}(xy^{n-1})y
&=\left(xy^{n-1}+(xy^{n-1})^n\right)y&&\\
&=\left(xy^{n-1}+x^ny^{n(n-1)}\right)y&&\text{by \eqref{eq:cryptic}}\\
&=xy^n+x^ny^{n(n-1)+1}&&\\
&=xy^n+x^ny&&\text{by \eqref{eq:srn}} \\
&=x+y&&\text{by \eqref{eq:new1},}
\end{align*}
thus establishing \eqref{eq:summing}.
\end{proof}

\begin{lemma}\label{lem:ideal}
Let $(S;\,+,\cdot)$ be an \ais{} in the variety $\mathbf{M}_n$, $J$ an ideal in $(S;\,\cdot)$, and $H$ a subgroup of $(S;\,\cdot)$ such that $J\cap H=\varnothing$ and $J\cup H$ is an ideal of $(S;\,\cdot)$. Then $a+b\in J$ for all distinct $a,b\in J\cup H$.
\end{lemma}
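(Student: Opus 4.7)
My plan is to exploit the Clifford decomposition of $(S;\,\cdot)$. The proof of Lemma~\ref{lem:3iden} established that the multiplicative semigroup of any ai-semiring in $\mathbf{M}_n$ is Clifford, so by Lemma~\ref{lem:clifthm} we may write $S = \bigsqcup_{\alpha \in Y} G_\alpha$ with $Y = E(S)$. A routine observation about Clifford semigroups is that every multiplicative ideal is a union of full components: if $i \in I \cap G_\alpha$ and $g \in G_\alpha$, then $g = (g i^{-1}) i \in SI \subseteq I$. Applying this to $J$ and to $J \cup H$, together with $J \cap H = \varnothing$, I would deduce that $H = G_{e_H}$, that $J = \bigsqcup_{\alpha \in A} G_\alpha$ for some down-set $A \subseteq Y$ with $e_H \notin A$, and that $A \supseteq \{\alpha \in Y : \alpha < e_H\}$ (for such $\alpha$ and $g \in G_\alpha$, the product $g e_H$ lies in $G_\alpha \cap (J \cup H)$ but outside $H$, forcing $G_\alpha \subseteq J$).

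The heart of the proof would be the following claim: if $s, t$ are distinct elements of a common component $G_\gamma$, then $s + t \in G_\delta$ for some $\delta < \gamma$. Granting this, I would apply identity~\eqref{eq:new1} to write $a + b = a b^n + a^n b$. For $a \in G_\alpha$ and $b \in G_\beta$, both summands lie in $G_{\alpha\beta}$, so $a + b \in G_\delta$ with $\delta \le \alpha\beta$, and the claim gives strict inequality once the two summands differ. A short case analysis on where $a$ and $b$ sit---both in $J$; one in $J$ and one in $H$; both in $H$---then forces $\delta \in A$ via the down-set property of $A$ and its containment of $\{\alpha : \alpha < e_H\}$. The critical ``both in $H$'' case uses $\alpha = \beta = e_H$ and $a b^n = a \ne b = a^n b$, so the claim yields $\delta < e_H$ and hence $\delta \in A$.

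To prove the claim, suppose for contradiction that $u := s + t$ lies in $G_\gamma$. Right-multiplying $s + t = u$ by $u^{-1}$ gives $s_1 + t_1 = \gamma$ for distinct $s_1 := s u^{-1}$ and $t_1 := t u^{-1}$ in $G_\gamma$. Left-multiplying $s_1 + t_1 = \gamma$ by $s_1^{-1}$ yields $\gamma + w = s_1^{-1}$, where $w := s_1^{-1} t_1 \in G_\gamma \setminus \{\gamma\}$. Since $w^n = \gamma$, the left side equals $w + w^n = \mathrm{M}(w)$, which is idempotent by Lemma~\ref{lem:m}. Hence $s_1^{-1} \in E(S) \cap G_\gamma = \{\gamma\}$, so $s_1 = \gamma$; the same reasoning applied after left-multiplying by $t_1^{-1}$ instead gives $t_1 = \gamma$, contradicting $s_1 \ne t_1$.

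The chief obstacle is precisely this claim. When $a, b \in H$, the identities \eqref{eq:new1} and \eqref{eq:summing} both degenerate to the tautology $a + b = a + b$, so one cannot locate the component of $a + b$ by identity manipulation alone and must invoke the Clifford structure. The decisive leverage is Lemma~\ref{lem:m}: the unconditional idempotency of $\mathrm{M}(w)$ prevents any non-identity element of $G_\gamma$ from being realized as $\mathrm{M}(w)$ inside $G_\gamma$, and this rigidity drives the contradiction.
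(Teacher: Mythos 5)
Your argument is correct, but it takes a genuinely different route from the paper's. The paper's proof is a short identity computation: for any $s\in J\cup H$ with $s\ne e$ (the identity of $H$), the idempotent $\mathrm{M}(s)=\mathrm{M}(s)s$ lies in $J\cup H$ and cannot equal $e$, hence lies in $J$; since distinct $a,b\in J\cup H$ force $ab^{n-1}\ne e$, identity \eqref{eq:summing} gives $a+b=\mathrm{M}(ab^{n-1})b\in J$ because $J$ is an ideal. You instead unfold the full Clifford decomposition $S=\bigsqcup_{\alpha\in Y}G_\alpha$, show that $J$ and $J\cup H$ are unions of components, and prove a strict-descent claim for sums of distinct elements of a single component---and your proof of that claim (normalizing to $\gamma+w=\mathrm{M}(w)$ and invoking the idempotency of $\mathrm{M}(w)$ from Lemma~\ref{lem:m}) is sound. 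Both proofs ultimately rest on the same rigidity of the operator $\mathrm{M}$, but the paper deploys it in one stroke on $ab^{n-1}$, whereas your version buys a more explicit structural picture (the exact semilattice location of $a+b$) at the cost of length and a case analysis. Two small repairs are needed. First, your proof of the descent claim only rules out $\delta=\gamma$; you should record separately that $\delta\le\gamma$, which your case analysis relies on (e.g., from $(s+t)\gamma=s+t$ and \eqref{eq:cryptic} the idempotent $(s+t)^n$ is fixed under multiplication by $\gamma$). Second, your closing remark that \eqref{eq:summing} ``degenerates to a tautology'' for $a,b\in H$, so that the component of $a+b$ cannot be found by identity manipulation, is not accurate: the paper's proof does exactly that, by reading \eqref{eq:summing} as a factorization $a+b=\mathrm{M}(ab^{n-1})\cdot b$ whose first factor is an idempotent provably lying in $J$, rather than as an equation to be simplified inside $H$.
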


\begin{proof}
By Lemma~\ref{lem:m}, for every $s\in J\cup H$, the element $\mathrm{M}(s)=s+s^n$ is an idempotent satisfying $\mathrm{M}(s)s=\mathrm{M}(s)$, and hence, lying in $J\cup H$ as $J\cup H$ is an ideal of $(S;\,\cdot)$. Clearly, the only idempotent in the subgroup $H$ is its identity element, which we denote by $e$. Thus, $\mathrm{M}(s)\in J$ or $\mathrm{M}(s)=e$. In the latter case, we must have $s\in H$, because if $s\in J$, then so is $\mathrm{M}(s)=\mathrm{M}(s)s$ since $J$ an ideal in $(S;\,\cdot)$. If $s\in H$, then $s=es$, and the equality $\mathrm{M}(s)=e$ implies $s=es=\mathrm{M}(s)s=\mathrm{M}(s)=e$. Therefore, for any $s\ne e$, it must be that $\mathrm{M}(s)\in J$.

If $a,b\in J\cup H$ are such that $ab^{n-1}=e$, then $a,b\in H$ whence $b^n=e$ as $b^n$ is an idempotent by Lemma~\ref{lem:e(s)}. It follows that $ab^n=ae=a$. On the other hand, multiplying both sides of $ab^{n-1}=e$ on the right by $b$, we obtain $ab^n=b$. Thus, $a=b$. Therefore, for any two distinct elements $a,b\in J\cup H$, we must have $ab^{n-1}\ne e$. As established in the previous paragraph, this implies $\mathrm{M}(ab^{n-1})\in J$. Then we have
\[
a+b\stackrel{\eqref{eq:summing}}= \mathrm{M}(ab^{n-1})b\in J,
\]
since $J$ an ideal in $(S;\,\cdot)$.
\end{proof}

An \ais{} with a multiplicative zero (i.e., an absorbing element for multiplication) is called a \emph{semifield} if its nonzero elements form a group under multiplication. Flat extensions of groups are examples of semifields.
Lemma~\ref{lem:ideal} implies  that these are the only semifields in the variety $\mathbf{M}_n$.

\begin{corollary}\label{cor:semifield}
Every semifield in the variety $\mathbf{M}_n$ is the flat extension of its group of  nonzero elements.
\end{corollary}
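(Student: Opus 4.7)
The plan is to deduce this directly from Lemma~\ref{lem:ideal} by taking $J=\{0\}$ and $H$ to be the group of nonzero elements of the semifield. Let $(S;\,+,\cdot)$ be a semifield in $\mathbf{M}_n$, and let $H=S\setminus\{0\}$ be its group of nonzero elements.

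First I would verify that the hypotheses of Lemma~\ref{lem:ideal} are satisfied: the singleton $J=\{0\}$ is a multiplicative ideal of $(S;\,\cdot)$ because $0$ is absorbing; $H$ is a subgroup of $(S;\,\cdot)$ by the definition of a semifield; $J\cap H=\varnothing$ by construction; and $J\cup H=S$, which is trivially an ideal of $(S;\,\cdot)$. Lemma~\ref{lem:ideal} then yields $a+b\in J=\{0\}$, i.e., $a+b=0$, for every pair of distinct elements $a,b\in S$.

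It remains to observe that the addition and multiplication of $S$ coincide with those of the flat extension $H^\flat$ as defined in \eqref{eq:flat}. For multiplication, this is immediate: the product in $S$ restricts to the group product on $H$, and any product involving $0$ equals $0$. For addition, the previous paragraph gives $a+b=0$ whenever $a\ne b$, and the additive idempotency of $(S;\,+)$ gives $a+a=a$, which together are precisely \eqref{eq:flat}. Hence the identity map $S\to H^\flat$ is an isomorphism of ai-semirings.

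There is no real obstacle here; the work has already been done in Lemma~\ref{lem:ideal}, and the corollary amounts to unpacking the definition of a semifield to see that the lemma applies with the simplest possible choice of $J$ and $H$.
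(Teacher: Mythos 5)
Your proof is correct and follows exactly the paper's own argument: apply Lemma~\ref{lem:ideal} with $J=\{0\}$ and $H$ the group of nonzero elements, then note that multiplication agrees by construction and addition agrees by the lemma together with additive idempotency. Nothing to add.
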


\begin{proof}
Let $(S;\,+,\cdot)$ be a semifield in $\mathbf{M}_n$. Denote its multiplicative zero by 0 and let $G:=S\setminus\{0\}$. Then $G$ is a group. We will show that the semifield $(S;\,+,\cdot)$ coincides with the flat extension $G^\flat$ of $G$. By construction, multiplication in $G^\flat$ agrees with that of $(S;\,+,\cdot)$, and applying Lemma~\ref{lem:ideal}, with $\{0\}$ in the role of the ideal $J$ and $G$ in the role of $H$, shows that addition in $(S;\,+,\cdot)$ satisfies the rule given in \eqref{eq:flat}.
\end{proof}

Recall that, by Lemma~\ref{lem:e(s)}, the idempotents form a subsemiring in every \ais{} belonging to the variety $\mathbf{Sr}_n$. Our next lemma shows that, in an \ais{} from $\mathbf{M}_n$, congruences on the subsemiring of idempotents extend to congruences on the entire semiring.
\begin{lemma}\label{lem:extension}
Let $(S;\,+,\cdot)$ be an \ais{} in $\mathbf{M}_n$. For every congruence $\rho$ on the subsemiring $(E(S);\,+,\cdot)$, there is a congruence on $(S;\,+,\cdot)$ whose restriction to $E(S)$ is $\rho$.
\end{lemma}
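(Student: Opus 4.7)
The plan is to construct $\tilde\rho$ explicitly by declaring $a \mathrel{\tilde\rho} b$ if and only if there exists $f \in E(S)$ with $f \mathrel{\rho} a^n$, $f \mathrel{\rho} b^n$, and $af = bf$. Since $a \in E(S)$ satisfies $a^n = a$, it is immediate that $\tilde\rho|_{E(S)} = \rho$: the forward inclusion holds because a witness $f$ for $a \mathrel{\tilde\rho} b$ (with $a, b \in E(S)$) forces $a = a^n \mathrel{\rho} f \mathrel{\rho} b^n = b$; the reverse inclusion uses the witness $f := ab$, which is idempotent by commutativity of $E(S)$, is $\rho$-related to both $a$ and $b$ by multiplying $a \mathrel{\rho} b$ by $a$ and by $b$, and satisfies $af = bf = ab$.

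For the equivalence-relation axioms, reflexivity uses the witness $f := a^n$ via \eqref{eq:srn}, symmetry is trivial, and transitivity (given witnesses $f_1, f_2$ for $a \mathrel{\tilde\rho} b$ and $b \mathrel{\tilde\rho} c$) goes through with $f := f_1 f_2$, exploiting that the multiplicative reduct of every ai-semiring in $\mathbf{M}_n$ is a Clifford semigroup and hence idempotents commute with every element (Lemma~\ref{lem:clifford}). Compatibility with multiplication is similar: if $a \mathrel{\tilde\rho} b$ via $f$ and $c \in S$, then $fc^n$ witnesses both $ac \mathrel{\tilde\rho} bc$ and $ca \mathrel{\tilde\rho} cb$, using \eqref{eq:cryptic} for the $\rho$-conditions and \eqref{eq:srn} together with commutation of $f$ with $c$ for the equation of witnesses.

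The main obstacle is compatibility with addition. Given $a \mathrel{\tilde\rho} b$ via $f$ and $c \in S$, the proposed witness for $a+c \mathrel{\tilde\rho} b+c$ is $g := (a+c)^n fc^n$. The crucial preliminary step is the semilattice identity $(a+c)^n \cdot a^n c^n = (a+c)^n$, which I would establish as follows: distribute $(a+c)\cdot a^n c^n = ac^n + a^n c$ using \eqref{eq:srn}, invoke \eqref{eq:new1} to equate this with $a+c$, and raise to the $n$-th power via \eqref{eq:cryptic}. Combined with $fc^n \mathrel{\rho} a^n c^n$, this yields $g \mathrel{\rho} (a+c)^n$. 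To show $g \mathrel{\rho} (b+c)^n$, I would first derive $(a+c)fc^n = (b+c)fc^n$ from distributivity and $af = bf$ (after reducing $cfc^n$ to $fc$ via \eqref{eq:srn} and commutation), and then take $n$-th powers via \eqref{eq:cryptic} to obtain the identity $g = (a+c)^n fc^n = (b+c)^n fc^n$; the symmetric argument gives the required $\rho$-relation. The final equality $(a+c)g = (b+c)g$ then reduces to $(a+c)fc^n = (b+c)fc^n$ upon applying \eqref{eq:srn} to collapse $(a+c) \cdot (a+c)^n$. The core difficulty is finding a witness $g$ that simultaneously controls both $\rho$-classes and the compatibility equation; the choice above succeeds precisely because the Clifford structure ensures that $(a+c)^n$ lies $\rho$-below $a^n c^n$ in $E(S)$.
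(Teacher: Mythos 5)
Your construction is correct, and the relation you define is exactly the paper's: your witness condition $af=bf$ with $f\mathrel{\rho}a^n\mathrel{\rho}b^n$ coincides with the paper's $ea=eb$ because idempotents are central by \eqref{eq:center}. The verification that the restriction to $E(S)$ is $\rho$, the equivalence axioms (witnesses $a^n$, $f_1f_2$), and multiplicative compatibility (witness $fc^n$) all match the paper's argument step for step. Where you genuinely diverge is additive compatibility, which is indeed the crux. The paper routes this through the operator $\mathrm{M}$: it first proves the auxiliary implication \eqref{002}, then shows $a\mathrel{\tau}b$ implies $\mathrm{M}(a)\mathrel{\tau}\mathrm{M}(b)$, and finally invokes $x+y=\mathrm{M}(xy^{n-1})y$ from \eqref{eq:summing} to transfer compatibility from $\mathrm{M}$ and multiplication to addition. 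You instead exhibit an explicit witness $g:=(a+c)^nfc^n$ for $a+c\mathrel{\tilde\rho}b+c$ and check the three conditions directly; I verified the key steps, namely $(a+c)^na^nc^n=(a+c)^n$ (from \eqref{eq:new1} and \eqref{eq:cryptic}), the resulting $g\mathrel{\rho}(a+c)^n$, the equality $(a+c)fc^n=(b+c)fc^n$ giving both $g=(b+c)^nfc^n$ (hence $g\mathrel{\rho}(b+c)^n$ by symmetry) and, after collapsing $(a+c)(a+c)^n$ via \eqref{eq:srn}, the witness equation $(a+c)g=(b+c)g$. All of this is sound; you rely on \eqref{eq:new1} where the paper relies on its consequence \eqref{eq:summing}, so the two arguments draw on the same identity of Lemma~\ref{lem:3iden}. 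Your version buys a shorter, self-contained computation with a concrete witness and no intermediate lemma about $\mathrm{M}$; the paper's version buys a reusable statement ($\tau$ respects the operator $\mathrm{M}$) that packages the same algebra more conceptually.
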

\begin{proof}
Define a binary relation $\tau$ on $S$ by
\[
a\mathrel{\tau}b \Longleftrightarrow \exists~e\in E(S)~ea=eb\ \&\ e\mathrel{\rho}a^n\mathrel{\rho}b^n.
\]
We refer to the right-hand side by saying that $a\mathrel{\tau}b$ is \emph{witnessed} by the idempotent $e$.

If $a,b\in E(S)$, then $a=a^n$ and $b=b^n$. Hence, $a\mathrel{\tau}b$ implies $a\mathrel{\rho}b$, that is, $\rho$ contains $\tau|_{E(S)}$. Conversely, if $a,b\in E(S)$ and $a\mathrel{\rho}b$, then $ab\in E(S)$, $aba=abb=ab$ since the idempotents $a$ and $b$ commute, and $ab\mathrel{\rho}a$ since $\rho$ is a congruence on $(E(S);\,+,\cdot)$. So, $a\mathrel{\tau}b$ is witnessed by the idempotent $ab$. Hence, $\rho$ is contained in $\tau|_{E(S)}$. Thus, $\rho=\tau|_{E(S)}$.

Clearly, the relation $\tau$ is symmetric. It also is reflexive as $a\mathrel{\tau}a$ is witnessed by the idempotent $a^n$. To check transitivity, suppose that $a\mathrel{\tau}b$ and $b\mathrel{\tau}c$ and let $e,f\in E(S)$ be the corresponding witnesses, that is,
\begin{gather*}
  ea=eb\ \&\ e\mathrel{\rho}a^n\mathrel{\rho}b^n,\\
  fb=fc\ \&\ f\mathrel{\rho}b^n\mathrel{\rho}c^n.
\end{gather*}
We have $ef\in E(S)$. Multiplying $ea=eb$ and $fb=fc$ on the left by $ef$ and using that the idempotents $e$ and $f$ commute, we obtain $efa=efc$. From $e\mathrel{\rho}b^n$ and $f\mathrel{\rho}b^n$, we conclude that $ef\mathrel{\rho}b^n$ since $\rho$ is a congruence on $(E(S);\,+,\cdot)$. By transitivity of $\rho$, we have $ef\mathrel{\rho}a^n\mathrel{\rho}c^n$. Hence, $a\mathrel{\tau}c$ is witnessed by the idempotent $ef$. Thus, $\tau$ is an equivalence.

It remains to verify that $\tau$ respects multiplication and addition. Suppose that $a\mathrel{\tau}b$ is witnessed by some $e\in E(S)$ and take any $c\in S$. Then $c^n$ is an idempotent and so is $c^ne$. Multiplying the equality $ea=eb$ on the left by $c^{n+1}$, we get $c^{n+1}ea=c^{n+1}eb$ whence
\[
c^neca\stackrel{\eqref{eq:center}}=c^{n+1}ea=c^{n+1}eb\stackrel{\eqref{eq:center}}=c^necb.
\]
From $e\mathrel{\rho}a^n\mathrel{\rho}b^n$, we conclude that $c^ne\mathrel{\rho}c^na^n\mathrel{\rho}c^nb^n$ since $\rho$ is a congruence on $(E(S);\,+,\cdot)$. We have
\[
c^na^n\stackrel{\eqref{eq:cryptic}}=(ca)^n\ \text{ and }\ c^nb^n\stackrel{\eqref{eq:cryptic}}=(cb)^n.
\]
Thus, $c^ne\mathrel{\rho}(ca)^n\mathrel{\rho}(cb)^n$, and we see that $ca\mathrel{\tau}cb$ is witnessed by the idempotent $c^ne$. Similarly, one verifies that $ac\mathrel{\tau}bc$. Thus, $\tau$ respects multiplication.

Checking that $\tau$ respects addition is based on expressing addition via multiplication and the operator $\mathrm{M}(\ )$; see \eqref{eq:summing}.

First, we prove the following implication:
\begin{equation}\label{002}
\text{If }\ f \in E(S)\ \text{ and }\  a\mathrel{\tau}f,\ \text{ then }\ a\mathrel{\tau}\mathrm{M}(a).
\end{equation}
Indeed, if $a\mathrel{\tau}f$ is witnessed by $e\in E(S)$, then $ea=ef$ yields $ea=e\cdot ef$, and $e\mathrel{\rho}a^n\mathrel{\rho}f$ ensures $e\mathrel{\rho}a^n\mathrel{\rho}ef$. Hence the idempotent $e$ also witnesses $a\mathrel{\tau}ef$. As we have already shown that $\tau$ respects multiplication, we have $\mathrm{M}(a)\stackrel{\eqref{21701}}=\mathrm{M}(a)a\mathrel{\tau}\mathrm{M}(a)ef$. From $ea=ef$, using that the idempotent $e$ commutes with $a$ by \eqref{eq:center}, we get
\[
\mathrm{M}(a)ef=(a+a^n)ef=eaf+(ea)^nf=ef+(ef)^nf=ef+ef=ef.
\]
Thus, we have $\mathrm{M}(a)\mathrel{\tau}ef$, which, combined with $a\mathrel{\tau}ef$, yields $a\mathrel{\tau}\mathrm{M}(a)$, as required.

Next, we show that $a\mathrel{\tau}b$ implies $\mathrm{M}(a)\mathrel{\tau}\mathrm{M}(b)$. Since $\tau$ respects multiplication, $a\mathrel{\tau}b$ implies $a\mathrm{M}(b)\mathrel{\tau}b\mathrm{M}(b)\stackrel{\eqref{21701}}=\mathrm{M}(b)$. Applying \eqref{002} with $a\mathrm{M}(b)$ in the role of $a$ and $\mathrm{M}(b)$ in the role of $f$ yields
\[
a\mathrm{M}(b)\mathrel{\tau}\mathrm{M}(a\mathrm{M}(b))=a\mathrm{M}(b)+(a\mathrm{M}(b))^n.
\]
We have
\[
a\mathrm{M}(b)+(a\mathrm{M}(b))^n\stackrel{\eqref{eq:cryptic}}=a\mathrm{M}(b)+a^n\mathrm{M}(b)^n\stackrel{\eqref{21701}}=a\mathrm{M}(b)+a^n\mathrm{M}(b)=(a+a^n)\mathrm{M}(b)=\mathrm{M}(a)\mathrm{M}(b).
\]
It now follows that $a\mathrm{M}(b)\mathrel{\tau}\mathrm{M}(a)\mathrm{M}(b)$ whence $\mathrm{M}(b)\mathrel{\tau}\mathrm{M}(a)\mathrm{M}(b)$. Similarly, we obtain $\mathrm{M}(a)\mathrel{\tau}\mathrm{M}(b)\mathrm{M}(a)$. Since  $\mathrm{M}(a)$ and $\mathrm{M}(b)$ are idempotents, $\mathrm{M}(a)\mathrm{M}(b)=\mathrm{M}(b)\mathrm{M}(a)$. Thus, we have $\mathrm{M}(a)\mathrel{\tau}\mathrm{M}(b)$.

Finally, suppose that $a\mathrel{\tau}b$ and take any $c\in S$. Since $\tau$ respects multiplication, we have $ac^{n-1}\mathrel{\tau}bc^{n-1}$ whence, as we have just shown, $\mathrm{M}(ac^{n-1})\mathrel{\tau}\mathrm{M}(bc^{n-1})$.
This implies
\[
a+c\stackrel{\eqref{eq:summing}}= \mathrm{M}(ac^{n-1})c\mathrel{\tau}\mathrm{M}(bc^{n-1})c\stackrel{\eqref{eq:summing}}=b+c.
\]
Thus, $\tau$ respects addition.
\end{proof}

An algebra is called \emph{subdirectly irreducible} if it has a smallest non-diagonal congruence. Notice that our definition excludes the one-element algebra.

\begin{lemma}\label{lem:sisemifield}
In the variety $\mathbf{M}_n$, every subdirectly irreducible \ais{} is a semifield.
\end{lemma}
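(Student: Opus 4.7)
\emph{Proof plan.} Let $(S;+,\cdot)$ be subdirectly irreducible in $\mathbf{M}_n$ and let $\mu$ denote its monolith. The plan is to show $|E(S)|=2$, use the Clifford structure of $\mathbf{M}_n$-semirings to write $S=G\cup I$ with $G$ the group about the identity idempotent $e$ and $I$ the group about the other idempotent $0$ (which is necessarily a multiplicative ideal of $S$), and finally collapse $I$ to a singleton via a congruence argument, rendering $S$ a semifield.

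To force $|E(S)|=2$, I would combine Lemmas~\ref{lem:restriction} and \ref{lem:extension}. By Lemma~\ref{lem:restriction}, $\mu|_{E(S)}$ is non-diagonal. The subsemiring $(E(S);+,\cdot)$ is a duplicated semilattice, so its congruence lattice coincides with that of the semilattice $(E(S);+)$. It is a standard fact that the only subdirectly irreducible semilattice with more than one element is the two-element one; hence if $|E(S)|\ge 3$, there exist two non-diagonal congruences $\rho_1,\rho_2$ on $(E(S);+)$ whose intersection is the diagonal of $E(S)$. Lifting them to non-diagonal congruences $\tau_1,\tau_2$ on $S$ via Lemma~\ref{lem:extension} and applying Lemma~\ref{lem:restriction} to $\tau_1\cap\tau_2$ yields $\tau_1\cap\tau_2$ equal to the diagonal of $S$, contradicting $\mu\subseteq\tau_1\cap\tau_2$. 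To exclude $|E(S)|=1$, note that if $E(S)=\{e\}$ then $s^n=e$ for every $s$ (Lemma~\ref{lem:e(s)}), so $\mathrm{M}(st^{n-1})$ must also equal $e$ (Lemma~\ref{lem:m}), and \eqref{eq:summing} collapses to $s+t=t$ for all $s,t\in S$; commutativity of $+$ then forces $|S|=1$, contradicting subdirect irreducibility.

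Writing $E(S)=\{e,0\}$ with $e\cdot 0=0$, Lemma~\ref{lem:clifthm} (applicable since \ais{}s in $\mathbf{M}_n$ have Clifford multiplicative semigroups, as noted in the proof of Lemma~\ref{lem:3iden}) decomposes $S=G\cup I$, with $G,I$ the maximal subgroups around $e,0$ respectively, and $I$ a multiplicative ideal. I would then introduce the equivalence $\rho_I$ on $S$ defined by $a\mathrel{\rho_I}b$ iff $a=b$ or $\{a,b\}\subseteq I$. Multiplicative compatibility of $\rho_I$ is immediate from $I$ being an ideal; additive compatibility follows from a short case analysis using Lemma~\ref{lem:ideal} (applied with $J:=I$, $H:=G$), which guarantees $a+c\in I$ whenever $a\in I$ and $c\in S\setminus\{a\}$.

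Since $\mu|_{E(S)}$ is non-diagonal on the two-point set $\{e,0\}$, we have $(0,e)\in\mu$. If $|I|\ge 2$, then $\rho_I$ is non-diagonal, so $\mu\subseteq\rho_I$, forcing $(0,e)\in\rho_I$; but $e\in G$ and $G\cap I=\varnothing$, a contradiction. Hence $I=\{0\}$, and $S=G\cup\{0\}$ is a semifield (coinciding with the flat extension $G^\flat$ by Corollary~\ref{cor:semifield}). The main obstacle is the last stage: identifying the right congruence $\rho_I$ and verifying it respects addition, which is precisely where Lemma~\ref{lem:ideal} does the real work. The congruence-lifting step in the second stage is comparatively routine, although it does rely on the careful interplay of Lemmas~\ref{lem:restriction} and \ref{lem:extension}.
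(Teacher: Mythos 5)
Your proof is correct, and it genuinely diverges from the paper's in its second half. The first half --- forcing $|E(S)|=2$ by playing Lemma~\ref{lem:restriction} against Lemma~\ref{lem:extension} --- is essentially the paper's argument in different packaging: the paper shows directly that $\theta|_{E(S)}$ is the \emph{least} non-diagonal congruence of $(E(S);\,+,\cdot)$, so that $E(S)$ is a subdirectly irreducible duplicated semilattice and hence two-element. (Two small remarks here: your claim that \emph{two} non-diagonal congruences with diagonal intersection exist whenever $|E(S)|\ge 3$ should be replaced by ``a family of non-diagonal congruences with diagonal intersection,'' which is what failure of subdirect irreducibility actually guarantees; the intersection of the corresponding lifts still contains the monolith, so your contradiction goes through verbatim. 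Also, your separate exclusion of $|E(S)|=1$ is superfluous, since a non-diagonal restriction of the monolith already forces $|E(S)|\ge 2$.) Where you genuinely part ways with the paper is the endgame. The paper takes the two idempotents, applies identity \eqref{eq:summing} to $s+1=1+s$, and multiplies by $0$ to compute directly that $0$ is a multiplicative zero, whence the maximal subgroup around $0$ collapses to $\{0\}$. You instead invoke the Clifford decomposition $S=G\cup I$, note that $I$ is a multiplicative ideal, build the Rees-type congruence $\rho_I$ (additive compatibility being exactly what Lemma~\ref{lem:ideal} supplies), and use the fact that the monolith contains $(0,e)$ to force $|I|=1$. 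Both routes ultimately rest on \eqref{eq:summing} --- yours through Lemma~\ref{lem:ideal} --- but yours is more structural and reuses Lemma~\ref{lem:ideal} in the same spirit as the paper's proof of Theorem~\ref{thm:main}, while the paper's is a shorter self-contained calculation that never needs to name the ideal $I$ or its quotient.
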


\begin{proof}
Let $(S;\,+,\cdot)$ be a subdirectly irreducible \ais{} in $\mathbf{M}_n$ and let $\theta$ denote its least non-diagonal congruence. The relation $\mu:=\theta|_{E(S)}$ is a congruence on the subsemiring $(E(S);\,+,\cdot)$. By Lemma~\ref{lem:restriction}, $\mu$ is not the diagonal of $E(S)$.

Let $\rho$ by any non-diagonal congruence on the subsemiring $(E(S);\,+,\cdot)$. By Lemma~\ref{lem:extension}, there exists is a congruence $\tau$ on $(S;\,+,\cdot)$ such that $\rho=\tau|_{E(S)}$. The congruence $\tau$ is not the diagonal of $S$ since its restriction $\rho$ is not the diagonal of $E(S)$. Therefore, $\tau$ must contain $\theta$ as $\theta$ is the least non-diagonal congruence on $(S;\,+,\cdot)$. Then $\rho=\tau|_{E(S)}$ contains $\mu=\theta|_{E(S)}$. It follows that $\mu$ is the least non-diagonal congruence on $(E(S);\,+,\cdot)$, and this means that the \ais{} $(E(S);\,+,\cdot)$ is subdirectly irreducible.

As discussed at the beginning of this section, $(E(S);\,+,\cdot)$ is a duplicated semilattice. It is well known (and easy to verify) that up to isomorphism, the only subdirectly irreducible semilattice is $(\{0,1\};\,\cdot)$ where $\cdot$ is the usual multiplication of 0 and 1. Its duplication is, up to isomorphism, the only subdirectly irreducible duplicated semilattice. We may assume that $E(S)=\{0,1\}$ and
\begin{equation}\label{eq:01}
0\cdot 0=0+0=0,\  0\cdot 1=0+1=0,\ 1\cdot 0=1+0=0,\ 1\cdot 1=1+1=1.
\end{equation}

Take an arbitrary element $s\in S$. By \eqref{eq:summing}, $s+1=\mathrm{M}(s\cdot 1)\cdot 1$.  On the other hand, applying \eqref{eq:summing} to $1+s$ yields $1+s=\mathrm{M}(1\cdot s^{n-1})s$. Since $s+1=1+s$, we have
\begin{equation}\label{eq:m}
\mathrm{M}(s\cdot 1)\cdot 1=\mathrm{M}(1\cdot s^{n-1})s.
\end{equation}
By Lemma~\ref{lem:m}, both $\mathrm{M}(s\cdot 1)$ and $\mathrm{M}(1\cdot s^{n-1})$ are idempotents, and thus belong to $\{0,1\}$. Hence \eqref{eq:01} yields
\[
0\cdot\mathrm{M}(s\cdot 1)=0\cdot\mathrm{M}(1\cdot s^{n-1})=0.
\]
Therefore, multiplying \eqref{eq:m} on the left by 0 yields $0\cdot 1=0\cdot s$. In view of \eqref{eq:01} and \eqref{eq:center}, this shows that 0 is a multiplicative zero in $(S;\,+,\cdot)$.

The semigroup $(S;\,\cdot)$ is a union of subgroups whose identity elements are the idempotents of $(S;\,\cdot)$. Since $E(S)=\{0,1\}$, the semigroup has exactly two subgroups: one with identity element 0 and one with identity element 1. As 0 has been shown to be a multiplicative zero, the former subgroup reduces to $\{0\}$ while the latter consists of all nonzero elements. Hence, $(S;\,+,\cdot)$ is a semifield.
\end{proof}

We are now ready to describe the subdirectly irreducible \ais{}s in the variety $\mathbf{M}_n$.

\begin{proposition}\label{prop:si}
For any \ais{} $(S;\,+,\cdot)$ in the variety $\mathbf{M}_n$, the following are equivalent:
\begin{itemize}
  \item [(i)]   $(S;\,+,\cdot)$ is subdirectly irreducible;
  \item [(ii)]  $(S;\,+,\cdot)$ is a semifield;
  \item [(iii)] $(S;\,+,\cdot)$ is the flat extension of a group in $\mathbf{G}_n$;
  \item [(iv)]  $(S;\,+,\cdot)$ is simple and $|S|>1$.
\end{itemize}
\end{proposition}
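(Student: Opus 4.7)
\medskip

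The plan is to prove the equivalences by showing the cycle (i) $\Rightarrow$ (ii) $\Rightarrow$ (iii) $\Rightarrow$ (iv) $\Rightarrow$ (i), leveraging the machinery already set up in the section.

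For (i) $\Rightarrow$ (ii), I would simply invoke Lemma~\ref{lem:sisemifield}, which is exactly this implication. For (ii) $\Rightarrow$ (iii), I would apply Corollary~\ref{cor:semifield}, which states that every semifield in $\mathbf{M}_n$ coincides with the flat extension of its group of nonzero elements; the only extra remark needed is that this group has exponent dividing $n$, which is immediate from the defining identity \eqref{eq:srn} restricted to the group part (since $x = x^{n+1}$ forces $x^n = 1$ in any group).

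For (iii) $\Rightarrow$ (iv), I would appeal directly to Lemma~\ref{lem:flatsimple}, which establishes that the flat extension of any group is a simple \ais{}; the cardinality condition $|S|>1$ holds trivially because $G^\flat$ contains at least the identity of $G$ together with $0$. For (iv) $\Rightarrow$ (i), I would use the standard universal-algebraic observation: if an algebra has more than one element and its only congruences are the diagonal and the universal relation, then the universal relation is itself the smallest non-diagonal congruence, so the algebra is subdirectly irreducible.

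I do not anticipate any real obstacle here, since all substantive content has been developed in the preceding lemmas; the proposition is essentially a packaging result. The only point meriting a brief explicit verification is that the group arising in (iii) lies in $\mathbf{G}_n$, as this uses the defining identity of $\mathbf{Sr}_n$ rather than any identity specific to $\mathbf{M}_n$. The rest of the proof is a one-line invocation at each step.
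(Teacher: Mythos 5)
Your proposal is correct and follows essentially the same route as the paper: the same cycle (i)~$\Rightarrow$~(ii)~$\Rightarrow$~(iii)~$\Rightarrow$~(iv)~$\Rightarrow$~(i), invoking Lemma~\ref{lem:sisemifield}, Corollary~\ref{cor:semifield}, and Lemma~\ref{lem:flatsimple} at the corresponding steps. Your explicit check that the group of nonzero elements has exponent dividing $n$ (via identity \eqref{eq:srn}) is a small, worthwhile addition that the paper leaves implicit.
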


\begin{proof} (i) $\Rightarrow$ (ii) is Lemma~\ref{lem:sisemifield}.

(ii) $\Rightarrow$ (iii) follows from Corollary~\ref{cor:semifield}.

(iii) $\Rightarrow$ (iv) is Lemma~\ref{lem:flatsimple}.

(iv) $\Rightarrow$ (i) is obvious.
\end{proof}

Combining Proposition~\ref{prop:si} with Birkhoff's Subdirect Decomposition Theorem \cite[Theorem II.8.6]{BuSa81} leads to the following structural description of the \ais{}s in $\mathbf{M}_n$.

\begin{corollary}
\label{cor:structure}
Every \ais{} with more than one element in the variety $\mathbf{M}_n$ is isomorphic to a subdirect product of flat extensions of groups from the variety $\mathbf{G}_n$.
\end{corollary}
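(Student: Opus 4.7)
The plan is to deduce the corollary directly from Birkhoff's Subdirect Decomposition Theorem \cite[Theorem II.8.6]{BuSa81} together with Proposition~\ref{prop:si}. Given an \ais{} $(S;\,+,\cdot)\in\mathbf{M}_n$ with $|S|>1$, Birkhoff's theorem provides a subdirect embedding of $(S;\,+,\cdot)$ into a Cartesian product of subdirectly irreducible algebras, each of which is a homomorphic image of $(S;\,+,\cdot)$ obtained by composing the embedding with the appropriate projection.

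Next I would observe that $\mathbf{M}_n$, being a variety, is closed under homomorphic images; hence every subdirect factor produced in the previous step again lies in $\mathbf{M}_n$. This reduces the problem to describing subdirectly irreducible members of $\mathbf{M}_n$, which is exactly what Proposition~\ref{prop:si} accomplishes: the implication (i)$\Rightarrow$(iii) of that proposition identifies each such factor with the flat extension of some group from $\mathbf{G}_n$. Substituting this identification into the subdirect embedding yields the desired representation of $(S;\,+,\cdot)$.

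There is no real obstacle here, since the substantive content has already been absorbed into Proposition~\ref{prop:si}. The only points that merit a brief mention in the write-up are that the hypothesis $|S|>1$ matches the convention, recalled just before Lemma~\ref{lem:sisemifield}, that subdirectly irreducible algebras are required to have more than one element, and that varietal closure under homomorphic images is what keeps all subdirect factors inside $\mathbf{M}_n$. Accordingly, I expect the written proof to be only a couple of sentences long.
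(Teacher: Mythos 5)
Your proposal matches the paper's argument exactly: the corollary is stated there as an immediate consequence of combining Proposition~\ref{prop:si} with Birkhoff's Subdirect Decomposition Theorem, and your added remarks about closure of $\mathbf{M}_n$ under homomorphic images and the convention excluding one-element algebras are just the routine details the paper leaves implicit. Nothing is missing.
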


\section{Embedding $L_q(\mathbf{G}_n)$ into $L(\mathbf{M}_n)$}
\label{sec:embedding}

For a variety $\mathbf{V}$ of \ais{}s, let $\mathbf{V}^\sharp$ stand for the class of all groups whose flat extensions lie in $\mathbf{V}$.

\begin{lemma}
\label{lem:v2g}
$\mathbf{V}^\sharp$ is a quasivariety of groups for each \ais{} variety $\mathbf{V}$ containing $\mathbf{M}_1$.
\end{lemma}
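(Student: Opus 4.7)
The strategy is to apply the classical Maltsev-type characterization of quasivarieties recorded in \cite[Theorem V.2.25]{BuSa81}: I will verify that $\mathbf{V}^\sharp$ contains the one-element group and is closed under isomorphic copies, subgroups, direct products, and ultraproducts. The first three closures are essentially immediate. The trivial group $\{e\}$ has flat extension $\{e,0\}$, in which addition and multiplication both act as the two-element meet semilattice; this is a duplicated semilattice, hence lies in $\mathbf{M}_1\subseteq\mathbf{V}$, so the trivial group belongs to $\mathbf{V}^\sharp$. Closure under isomorphism is clear since the flat-extension construction is functorial. If $H\le G$ with $G\in\mathbf{V}^\sharp$, then the subset $H\cup\{0\}\subseteq G^\flat$ is obviously closed under the operations of $G^\flat$ and carries exactly the structure of $H^\flat$; subalgebra closure of the variety $\mathbf{V}$ then gives $H^\flat\in\mathbf{V}$, i.e.\ $H\in\mathbf{V}^\sharp$.

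The substantive part of the proof is product closure. The naive hope that the flat-extension functor commutes with direct products fails immediately, because the unique zero of $(\prod_i G_i)^\flat$ corresponds to many different ``partially zero'' tuples in $\prod_i G_i^\flat$. My plan is to circumvent this by exhibiting $(\prod_i G_i)^\flat$ as a homomorphic image of $\prod_i G_i^\flat$ under the surjection
\[
\pi\colon \prod_i G_i^\flat\longrightarrow \Bigl(\prod_i G_i\Bigr)^\flat
\]
that sends every tuple having at least one coordinate equal to $0_i$ to the unique zero of $(\prod_i G_i)^\flat$ and fixes every fully nonzero tuple. The verification that $\pi$ respects multiplication is straightforward since a product in $\prod_i G_i^\flat$ acquires a zero coordinate as soon as either factor has one. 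For addition, the decisive observation is that for distinct $a,b\in\prod_i G_i$ the componentwise sum $a+b$ computed in $\prod_i G_i^\flat$ carries a zero coordinate at every position where $a_i\neq b_i$, hence lies in the collapsed class, matching the rule $a+b=0$ in $(\prod_i G_i)^\flat$. Since $\mathbf{V}$ is closed under direct products and homomorphic images, and each $G_i^\flat\in\mathbf{V}$, it follows that $(\prod_i G_i)^\flat\in\mathbf{V}$, so $\prod_i G_i\in\mathbf{V}^\sharp$.

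Ultraproduct closure will follow from a strictly parallel construction. Given an ultrafilter $\mathcal{U}$ on $I$, I plan to define a surjective homomorphism $\prod_\mathcal{U} G_i^\flat \to (\prod_\mathcal{U} G_i)^\flat$ which sends the class $[(a_i)]$ to $0$ whenever $\{i\in I:a_i=0_i\}\in\mathcal{U}$, and otherwise modifies the representative on the $\mathcal{U}$-small set of zero coordinates to land in $\prod_i G_i$ before passing to $\prod_\mathcal{U} G_i$. The well-definedness and the homomorphism properties reduce to the ultrafilter dichotomy: for any two classes, either the set of coordinates on which they agree or its complement is in $\mathcal{U}$, and this matches exactly the two cases of the flat-extension addition rule. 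Since varieties are closed under ultraproducts and homomorphic images, this suffices. The main conceptual hurdle throughout is isolating the correct ``collapsing'' congruence; once this is in hand, the remaining work amounts to bookkeeping against the definitions of addition and multiplication in a flat extension.
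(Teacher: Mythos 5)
Your proposal is correct and follows the same overall strategy as the paper: verifying the closure conditions of \cite[Theorem V.2.25]{BuSa81}, with the trivial group, subgroup, and direct-product cases handled exactly as in the paper's proof (your collapsing surjection $\pi$ is precisely the paper's homomorphism $\psi$). The one place where you genuinely diverge is the ultraproduct step. You propose a surjective homomorphism from the ultraproduct $\prod_{\mathcal U}G_i^\flat$ onto $\bigl(\prod_{\mathcal U}G_i\bigr)^\flat$, collapsing a class to zero exactly when its set of zero coordinates lies in $\mathcal U$; the paper instead embeds $\bigl(\prod_{\mathcal U}G_i\bigr)^\flat$ as a subsemiring of $\prod_{\mathcal U}G_i^\flat$, namely $H\cup\{\mathbf 0\}$ where $H$ is the injective multiplicative image of the ultraproduct group. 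Both routes are sound: yours has the appeal of reusing the same collapsing-homomorphism pattern as in the direct-product case, at the cost of checking that a map defined on $\sim_{\mathcal U}$-classes via a choice of modified representative is well defined (which does go through by the ultrafilter dichotomy, as you indicate); the paper's subalgebra route avoids well-definedness issues entirely and only requires verifying that a concrete subset is closed under the operations and carries the flat-extension structure. Your sketch of the ultraproduct verification is terser than I would accept in a final write-up, but the plan contains all the ideas needed and the remaining work is indeed bookkeeping.
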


\begin{proof}
We have to verify that the class $\mathbf{V}^\sharp$ contains the one-element group $E$ and is closed under forming isomorphic copies of subgroups, direct products and ultraproducts.

Since the flat extension $E^\flat$ of $E$ is isomorphic to the duplicated semilattice  $\langle\{0,1\};\,+,\cdot\rangle$ with the operations defined by \eqref{eq:01}, it lies in the variety $\mathbf{M}_1$. Hence, $E\in \mathbf{V}^\sharp$ for every \ais{} variety $\mathbf{V}$ containing $\mathbf{M}_1$.

Let $H$ be a group and $\varphi$ an isomorphic embedding of $H$ into a group $G\in\mathbf{V}^\sharp$. Define $\varphi^\flat\colon H^\flat\to G^\flat$ by setting, for every $a\in H\cup\{0\}$,
\[
a\varphi^\flat:=
\begin{cases}
  a\varphi&\text{if } a\in H,\\
  0&\text{if } a=0.
\end{cases}
\]
It is easy to check that $\varphi^\flat$ preserves addition and multiplication, so $H^\flat$ is isomorphic to a subsemiring in $G^\flat$. The \ais{} $G^\flat$ belongs to the variety $\mathbf{V}$ by the definition of the class $\mathbf{V}^\sharp$. Varieties are closed under taking subsemirings, whence $H^\flat$ also lies in $\mathbf{V}$. Therefore, the group $H$ belongs to $\mathbf{V}^\sharp$.

To establish closedness under direct products, take any family \(\{G_i\}_{i \in I}\) of groups with each $G_i\in\mathbf{V}^\sharp$ and let $G:=\prod_{i \in I}G_i$ be the direct product of the family. By the definition of the class $\mathbf{V}^\sharp$, all \ais{}s $G_i^\flat$ belong to the variety $\mathbf{V}$. Since varieties are closed under direct products, the direct product $\prod_{i \in I}G^\flat_i$ also lies in $\mathbf{V}$. Define $\psi\colon\prod_{i \in I}G^\flat_i\to G^\flat$ by setting, for every $I$-tuple $(a_i)$,
\[
(a_i)\psi:=
\begin{cases}
  (a_i)&\text{if } a_i\in G_i \text{ for all } i\in I,\\
  0&\text{if }  a_i \text{ is the zero of $G^\flat_i$ for some } i\in I.
\end{cases}
\]
It is easy to verify that $\psi$ is a semiring homomorphism onto $G^\flat$. Varieties are closed under homomorphic images, so $G^\flat$ belongs to $\mathbf{V}$. Hence, the group $G$ belongs to $\mathbf{V}^\sharp$.

It remains to show that $\mathbf{V}^\sharp$ is closed under ultraproducts. Recall the definition of an ultraproduct.

A \emph{ultrafilter} $\mathcal{U}$ on a set $I$ is a nonempty collection of subsets of $I$ such that:
\begin{itemize}
  \item if $K\in\mathcal{U}$ and $K\subseteq L$, then $L\in\mathcal{U}$;
  \item if $K,L\in\mathcal{U}$, then $K\cap L\in\mathcal{U}$;
  \item for every subset $K\subseteq I$, either $K\in\mathcal{U}$ or $I\setminus K\in \mathcal{U}$.
 \end{itemize}

Let \(\{A_i\}_{i \in I}\) be a family of algebras of the same signature and $\mathcal{U}$ an ultrafilter on the index set $I$. The \emph{ultraproduct} of \(\{A_i\}_{i \in I}\) over $\mathcal{U}$ is the quotient of the direct product $\prod_{i \in I}A_i$ by the congruence $\sim_\mathcal{U}$ defined by
\[
(a_i)\sim_\mathcal{U}(b_i) \Longleftrightarrow \{i\in I \mid a_i=b_i\}\in \mathcal{U}.
\]

Now take any family \(\{G_i\}_{i \in I}\) of groups with each $G_i\in\mathbf{V}^\sharp$ and any ultrafilter $\mathcal{U}$ on $I$. Form the ultraproduct $G$ of \(\{G_i\}_{i \in I}\) over $\mathcal{U}$. Let $(S;\,+,\cdot)$ stand for the ultraproduct of the family \(\{G^\flat_i\}_{i \in I}\) of the flat extensions of the groups $G_i$, taken over the same filter $\mathcal{U}$.

By the definition of the class $\mathbf{V}^\sharp$, all \ais{}s $G_i^\flat$ belong to the variety $\mathbf{V}$. Since varieties are closed under direct products and homomorphic images, the \ais{} $(S;\,+,\cdot)$, being a quotient of the direct product $\prod_{i \in I}G^\flat_i$, also lies in $\mathbf{V}$.

Consider the map $\iota$ that sends every $I$-tuple from the direct product $\prod_{i \in I}G_i$ to the same $I$-tuple, now viewed as an element of the direct product $\prod_{i \in I}G^\flat_i$. Clearly,
\[
(a_i)\sim_\mathcal{U}(b_i)\ \text{ if and only if }\ (a_i)\iota\sim_\mathcal{U}(b_i)\iota.
\]
This ensures that the map $G\to S$, which sends the $\sim_\mathcal{U}$-class of $(a_i)$ to the $\sim_\mathcal{U}$-class of $(a_i)\iota$, is well-defined and injective. It is clear that the map preserves multiplication whence its image $H$ is a subgroup of the multiplicative reduct $(S;\cdot)$ isomorphic to the group $G$.

Let $\mathbf{0}$ be the $\sim_\mathcal{U}$-class of the $I$-tuple $(0_i)$ whose $i$th entry is the zero $0_i$ of $G^\flat_i$ for all $i\in I$. It is easy to see the set $H\cup\{\mathbf{0}\}\subseteq S$ is closed under multiplication. We claim that it is also closed under addition, for which it suffices to show that the sum of any two distinct elements of $H$ is $\mathbf{0}$. That is, we must verify that the $\sim_\mathcal{U}$-class of $(a_i)\iota+(b_i)\iota$ is $\mathbf{0}$ whenever $(a_i)\nsim_\mathcal{U}(b_i)$. If $(a_i)\nsim_\mathcal{U}(b_i)$, then by the definition of the congruence $\sim_\mathcal{U}$, the set $K:=\{i\in I \mid a_i=b_i\}$ does not belong to $\mathcal{U}$. Since $\mathcal{U}$ is an ultrafilter, it follows that the set $I\setminus K=\{i\in I \mid a_i\ne b_i\}$ is in $\mathcal{U}$. For each $i\in I\setminus K$, we have $a_i+b_i=0_i$ in $G^\flat_i$, so $(a_i)\iota+(b_i)\iota\sim_\mathcal{U}(0_i)$.

Thus, $(H\cup\{\mathbf{0}\};\,+,\cdot)$ is a subsemiring of the \ais{} $(S;\,+,\cdot)$, and it is easy to see that this subsemiring is isomorphic to the flat extension $G^\flat$ of $G$. Since varieties are closed under taking subsemirings,  it follows that $G^\flat \in \mathbf{V}$. Thus, the ultraproduct $G$ lies in $\mathbf{V}^\sharp$.
\end{proof}

Given a semigroup quasivariety $\mathbf{Q}$ consisting of groups, let $\mathbf{Q}^\flat:=\var\{G^\flat\mid G\in\mathbf{Q}\}$ stand for the variety of \ais{}s generated by the flat extensions of groups from $\mathbf{Q}$.

\begin{theorem}
\label{thm:main}
Let $n$ be a positive integer. The maps $\flat\colon\mathbf{Q}\mapsto\mathbf{Q}^\flat$ and $\sharp\colon\mathbf{V}\mapsto\mathbf{V}^\sharp$ are mutually inverse complete isomorphisms between the lattice $L_q(\mathbf{G}_n)$ of all quasivarieties of groups of exponent dividing $n$ and the interval of the lattice of \ais{} varieties consisting of all varieties that contain the variety $\mathbf{M}_1$ and are contained in the variety $\mathbf{M}_n$.
\end{theorem}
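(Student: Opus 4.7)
My plan is to show that $\flat$ and $\sharp$ are mutually inverse order-preserving bijections; completeness of the resulting isomorphism then follows automatically, since mutually inverse monotone bijections between complete lattices preserve arbitrary meets and joins. For well-definedness, if $\mathbf{Q}\subseteq\mathbf{G}_n$ then $\mathbf{Q}^\flat$ contains $\mathbf{M}_1$ (as $\mathbf{Q}$ contains the trivial group, whose flat extension is the two-element duplicated semilattice that generates $\mathbf{M}_1$) and is contained in $\mathbf{M}_n$ (as each $H^\flat$ with $H\in\mathbf{G}_n$ directly satisfies \eqref{eq:srn} and \eqref{new3}); dually, Lemma~\ref{lem:v2g} gives that $\mathbf{V}^\sharp$ is a group quasivariety, and the identity $x^{n+1}=x$ inherited from $\mathbf{Sr}_n$ forces $\mathbf{V}^\sharp\subseteq\mathbf{G}_n$. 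Monotonicity of each map is immediate from the definitions. For $(\mathbf{V}^\sharp)^\flat=\mathbf{V}$, the inclusion $\subseteq$ is trivial, and $\supseteq$ combines Birkhoff's Subdirect Decomposition Theorem with Proposition~\ref{prop:si}: any $A\in\mathbf{V}$ with $|A|>1$ is a subdirect product of its subdirectly irreducible quotients in $\mathbf{V}\subseteq\mathbf{M}_n$, each of which is a flat extension $G_\alpha^\flat$ with $G_\alpha\in\mathbf{V}^\sharp$, so $A\in(\mathbf{V}^\sharp)^\flat$.

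The substantive direction is $(\mathbf{Q}^\flat)^\sharp\subseteq\mathbf{Q}$. Given $G$ with $G^\flat\in\mathbf{Q}^\flat=\mathrm{HSP}\{H^\flat:H\in\mathbf{Q}\}$, I would fix a subsemiring $T\leq\prod_{i\in I}H_i^\flat$ (each $H_i\in\mathbf{Q}$) and a surjection $\psi:T\twoheadrightarrow G^\flat$, and set $T':=\psi^{-1}(G)$. The strategy is to produce, for each $g\in G$ with $g\neq 1$, an ultrafilter $\mathcal{U}_g$ on $I$ and a group homomorphism $\tilde\eta_g:G\to\prod_i H_i/\mathcal{U}_g$ with $\tilde\eta_g(g)\neq 1$; the diagonal map $G\to\prod_{g\neq 1}\prod_i H_i/\mathcal{U}_g$ is then injective, and its target lies in $\mathbf{Q}$ by closure under ultraproducts and products, forcing $G\in\mathbf{Q}$. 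For the basic filter, any $s,t\in T'$ give $\psi(st)\in G$ nonzero, hence $\mathrm{supp}(s)\cap\mathrm{supp}(t)\neq\varnothing$, so $\mathcal{F}:=\{\mathrm{supp}(t):t\in T'\}$ has the finite intersection property. Choose a lift $t_g\in\psi^{-1}(g)$ for each $g$; replacing zero entries of $t\in T'$ by the identities $1_i\in H_i$ yields $t^\sharp\in\prod H_i$, and for any ultrafilter $\mathcal{U}\supseteq\mathcal{F}$ the assignment $t\mapsto[t^\sharp]$ defines a semigroup homomorphism $\eta:T'\to\prod H_i/\mathcal{U}$, which in turn descends to a group homomorphism $\tilde\eta:G\to\prod H_i/\mathcal{U}$---the descent uses that $\psi(s)=\psi(t)\neq 0$ forces $s+t\in T'$ with $\mathrm{supp}(s+t)\subseteq\{i:s_i=t_i\}$, whence $\{i:s_i=t_i\}\in\mathcal{U}$ and $[s^\sharp]=[t^\sharp]$.

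The main obstacle, and the one genuinely new algebraic step, is the separation property $\tilde\eta_g(g)\neq 1$. I would secure it by enlarging $\mathcal{F}$ with the single set $I\setminus E_g$, where $E_g:=\{i\in I:(t_g)_i\in\{0_i,1_i\}\}$. Finite intersection property of $\mathcal{F}\cup\{I\setminus E_g\}$ reduces, since finite intersections of $T'$-supports are again $T'$-supports, to verifying $\mathrm{supp}(u)\not\subseteq E_g$ for every $u\in T'$: if $\mathrm{supp}(u)\subseteq E_g$, then using Lemma~\ref{lem:e(s)} the element $t_g u^n\in T$ has every coordinate in $\{0_i,1_i\}$---equal to $(t_g)_i\in\{0_i,1_i\}$ on $\mathrm{supp}(u)$ and to $0_i$ elsewhere---so $t_g u^n$ is an idempotent of $T$; but then $\psi(t_g u^n)=g\cdot 1=g$ must be an idempotent of $G$, forcing $g=1$, a contradiction. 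Extending $\mathcal{F}\cup\{I\setminus E_g\}$ to an ultrafilter $\mathcal{U}_g$ gives $E_g\notin\mathcal{U}_g$, so $\tilde\eta_g(g)=[t_g^\sharp]\neq[1]$. Combining across all $g\neq 1$ embeds $G$ into a product of ultraproducts of members of $\mathbf{Q}$, yielding $G\in\mathbf{Q}$ and completing the proof; the completeness of the lattice isomorphism is then automatic.
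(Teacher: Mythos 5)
Your proposal is correct, and while the routine parts (well-definedness of both maps, the identity $(\mathbf{V}^\sharp)^\flat=\mathbf{V}$ via Birkhoff's subdirect decomposition theorem and Proposition~\ref{prop:si}, and the automatic completeness of an order isomorphism between complete lattices) coincide with the paper's, you take a genuinely different route through the one substantive inclusion $(\mathbf{Q}^\flat)^\sharp\subseteq\mathbf{Q}$. The paper first reduces to finitely generated $G$, then analyses the preimage $H=G\zeta^{-1}$ as a finitely generated Clifford semigroup: by Lemma~\ref{lem:clifthm} it is a finite semilattice of groups, the bottom component $H_\gamma$ is an ideal mapping onto $G$, and Lemma~\ref{lem:ideal} shows that $\zeta$ restricted to $H_\gamma$ is injective, exhibiting $G\cong H_\gamma$ as a subdirect product of subgroups of groups from $\mathbf{Q}$. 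You instead keep $G$ arbitrary and argue with ultrafilters: the supports of elements of $T'=\psi^{-1}(G)$ form a filter base because $\mathrm{supp}(s)\cap\mathrm{supp}(t)=\mathrm{supp}(st)$ and $\psi(st)\in G$ forbids empty support; the map $t\mapsto[t^\sharp]$ descends to a group homomorphism on $G$ because $\psi(s)=\psi(t)$ forces $\mathrm{supp}(s+t)\subseteq\{i\mid s_i=t_i\}$; and your separation lemma (if $\mathrm{supp}(u)\subseteq E_g$ then $t_gu^n$ is a multiplicative idempotent mapping onto $g$, forcing $g=1$) lets you adjoin $I\setminus E_g$ to the filter and separate each $g\ne 1$ from the identity, since $\{i\mid (t_g^\sharp)_i=1_i\}=E_g\notin\mathcal{U}_g$. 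I checked these steps and they all hold, including the edge cases of the trivial group and the fact that $G$ has exponent dividing $n$ (needed for $\psi(u)^n=1$). The trade-off is clear: the paper's argument is more elementary and stays within its announced ``purely algebraic'' programme, using only closure of $\mathbf{Q}$ under subalgebras and subdirect products together with Clifford structure theory, whereas your argument dispenses with the finitely-generated reduction and the semilattice decomposition but invokes the ultrafilter lemma and closure of quasivarieties under ultraproducts---machinery the paper confines to Lemma~\ref{lem:v2g}. Both yield complete proofs of the theorem.
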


\begin{proof}
For any group $G$ in the variety $\mathbf{G}_n$, its flat extension $G^\flat$ clearly satisfies identity \eqref{eq:srn}. Therefore, the $n$th powers are precisely the idempotents in $G^\flat$. By construction, the flat extension of any group has exactly two idempotents: the identity element of the group and 0. Hence, when adding or multiplying two $n$th powers, the result is always 0, unless both operands are equal to the identity element of the group. In either case, the sum of two $n$th powers equals their product. This shows that $G^\flat$ satisfies identity \eqref{new3}.

We have thus proved that the flat extension of every group from $\mathbf{G}_n$ belongs to the variety $\mathbf{M}_n$. Hence, $\mathbf{Q}^\flat$ is contained in $\mathbf{M}_n$ for every quasivariety $\mathbf{Q}\in L_q(\mathbf{G}_n)$.

The variety $\mathbf{M}_1$ is known to be a minimal nontrivial semiring variety; see \cite{Polin80} where this variety appears as $\mathfrak{J}$.  Consequently, $\mathbf{M}_1$ is generated by any of its nontrivial members. In particular, the flat extension of the one-element group $E$ generates $\mathbf{M}_1$. Therefore, $\mathbf{Q}^\flat$ contains $\mathbf{M}_1$ for every $\mathbf{Q}\in L_q(\mathbf{G}_n)$.

Denote the interval of the lattice of \ais{} varieties consisting of all varieties that contain $\mathbf{M}_1$ and are contained in $\mathbf{M}_n$ by $[\mathbf{M}_1,\mathbf{M}_n]$. We see that this interval includes the image of the lattice $L_q(\mathbf{G}_n)$ under the map $\flat:\mathbf{Q}\mapsto\mathbf{Q}^\flat$.

Take any variety $\mathbf{V}\in[\mathbf{M}_1,\mathbf{M}_n]$. Every nontrivial variety is generated by its subdirectly irreducible members; see \cite[Corollary II.9.7]{BuSa81}. This applies to $\mathbf{V}$ as well, but by Proposition~\ref{prop:si}, the subdirectly irreducible members are flat extensions of groups. According to Lemma~\ref{lem:v2g}, groups whose flat extensions lie in $\mathbf{V}$ form a quasivariety, denoted by $\mathbf{V}^\sharp$. Hence, $\mathbf{V}$ is generated by the flat extensions of groups from $\mathbf{V}^\sharp$, that is, $\mathbf{V}=(\mathbf{V}^\sharp)^\flat$.

Groups satisfy the semigroup identities of their flat extensions, and therefore, groups in $\mathbf{V}^\sharp$ satisfy identity \eqref{eq:srn}. The exponent of any group satisfying \eqref{eq:srn} divides $n$, so the quasivariety $\mathbf{V}^\sharp$ is contained in $\mathbf{G}_n$. We have thus proved that $\flat:\mathbf{Q}\mapsto\mathbf{Q}^\flat$ maps the lattice $L_q(\mathbf{G}_n)$ onto the interval $[\mathbf{M}_1,\mathbf{M}_n]$.

Now take any quasivariety $\mathbf{Q}\in L_q(\mathbf{G}_n)$. We aim to show that $(\mathbf{Q}^\flat)^\sharp=\mathbf{Q}$. The quasivariety $(\mathbf{Q}^\flat)^\sharp$ consists of groups whose flat extensions belong to the variety $\mathbf{Q}^\flat$ generated by the flat extensions of groups from $\mathbf{Q}$. Hence, every group from $\mathbf{Q}$ lies in $(\mathbf{Q}^\flat)^\sharp$. It remains to prove that every group $G$ from $(\mathbf{Q}^\flat)^\sharp$ lies in $\mathbf{Q}$. It is known (and easy to verify) that a group belongs to a quasivariety if all of its finitely generated subsemigroups do. Hence we may assume that $G$ is finitely generated.

The flat extension $G^\flat$ belongs to the variety $\mathbf{Q}^\flat$. By the HSP Theorem, $G^\flat$ is a homomorphic image of an \ais{} $(S;\,+,\cdot)$ that embeds into a direct product of a family of flat extensions of groups in $\mathbf{Q}$. Denoting the homomorphism of $(S;\,+,\cdot)$ onto $G^\flat$ by $\zeta$, let $H:=G\zeta^{-1}$ and $J:=\{0\}\zeta^{-1}$. Then $(H;\,\cdot)$ is a subsemigroup and $J$ is an ideal of $(S;\cdot)$. Since the group $G$ is finitely generated, we may assume that $(S;\,+,\cdot)$ is chosen so that $(H;\,\cdot)$ is a finitely generated semigroup. Being a Clifford semigroup, $(H;\,\cdot)$ is a semilattice $Y$ of its subgroups $H_\alpha$, $\alpha\in Y$, by Lemma~\ref{lem:clifthm}. Since $Y$ is the image of $H$ under the homomorphism that maps the elements of each subgroup $H_\alpha$ to $\alpha$, the semilattice $Y$ is finitely generated. Finitely generated semilattices are known to be finite, so $Y$ is finite. If $\gamma$ is the product of all elements of $Y$, then $\alpha\gamma=\gamma\alpha=\gamma$ for every $\alpha\in Y$. Therefore, for all $a\in H$ and $b\in H_\gamma$, we have $ab,ba\in H_\gamma$, that is, the group $H_\gamma$ forms an ideal of $(H;\,\cdot)$. Then $H_\gamma\zeta$ is an ideal of the group $G$, whence $H_\gamma\zeta=G$ as no group can have a proper ideal.

Let $a,b\in H_\gamma$ be such that $a\zeta=b\zeta$. Then $(a+b)\zeta=a\zeta\in G$. Since $H_\gamma$ is an ideal of $(H;\,\cdot)$, the union $J\cup H_\gamma$ forms an ideal of $(S;\,\cdot)$. We are in a position to apply Lemma~\ref{lem:ideal}, showing that if $a\ne b$, then $a+b\in J$, whence $(a+b)\zeta=0$. We see that the equality $a\zeta=b\zeta$ is only possible if $a=b$. Hence, the restriction of the homomorphism $\zeta$ to $H_\gamma$ is one-to-one, and therefore, the groups $G$ and $H_\gamma$ are isomorphic.

Recall that the \ais{} $(S;\,+,\cdot)$ embeds into a direct product of a family $\{G^\flat_i\}_{i\in I}$ of flat extensions of groups $G_i\in\mathbf{Q}$. Denote the projection $\prod_{i \in I}G^\flat_i\to G^\flat_i$ by $\pi_i$. Restricting these projections to the subgroup $H_\gamma$ of $(S;\,\cdot)$, we get its decomposition as a subdirect product of the subgroups $H_\gamma\pi_i$ of the groups $G_i$. Hence, the group $H_\gamma$ belongs to the quasivariety $\mathbf{Q}$, and so does the group $G$. The equality $(\mathbf{Q}^\flat)^\sharp=\mathbf{Q}$ is thus established.

From the equalities $(\mathbf{Q}^\flat)^\sharp=\mathbf{Q}$ and $\mathbf{V}=(\mathbf{V}^\sharp)^\flat$, we conclude that the maps $\flat\colon\mathbf{Q}\mapsto\mathbf{Q}^\flat$ and $\sharp\colon\mathbf{V}\mapsto\mathbf{V}^\sharp$ are mutually inverse bijections between the lattice $L_q(\mathbf{G}_n)$ and the interval $[\mathbf{M}_1,\mathbf{M}_n]$. Each of these maps clearly preserves class inclusions, and it is well known that every order-preserving bijection between two complete lattices is a complete lattice isomorphism.
\end{proof}

\section{Applications}

\subsection{The number of subvarieties of the variety $\mathbf{Sr}_n$}

The subvarieties of the varieties $\mathbf{Sr}_1$ and $\mathbf{Sr}_2$ were classified in \cite{GPZ05,Pas05} and \cite{RZW17}, respectively. In \cite{GPZ05,Pas05}, it was shown that $\mathbf{Sr}_1$---that is, the variety of all multiplicatively idempotent \ais{}s---has exactly 78 subvarieties. In \cite{RZW17}, the classification was extended to the subvarieties of $\mathbf{Sr}_2$, whose number was established to be 179. The natural question of whether a further extension to $\mathbf{Sr}_3$, $\mathbf{Sr}_4$, etc., is possible was one of the initial motivations for the present paper. Our first application demonstrates a drastic change in the number of subvarieties of $\mathbf{Sr}_n$ that occurs when $n$ rises from 2 to 3.

\begin{corollary}
\label{cor:cardinal}
For every $n\ge 3$, the variety $\mathbf{M}_n$ \textup(and hence, the variety $\mathbf{Sr}_n$\textup) has continuum many subvarieties.
\end{corollary}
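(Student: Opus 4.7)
The plan is to combine Theorem \ref{thm:main} with a standard cardinality result about group quasivarieties. Since $\mathbf{M}_n \subseteq \mathbf{Sr}_n$, we have $L(\mathbf{M}_n) \subseteq L(\mathbf{Sr}_n)$, so it suffices to exhibit continuum many subvarieties of $\mathbf{M}_n$. By Theorem \ref{thm:main}, the interval $[\mathbf{M}_1, \mathbf{M}_n] \subseteq L(\mathbf{M}_n)$ is order-isomorphic to $L_q(\mathbf{G}_n)$; this reduces the problem to establishing $|L_q(\mathbf{G}_n)| = 2^{\aleph_0}$ for every $n \geq 3$.

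To verify the cardinality bound for every $n \geq 3$, I would split into two cases according to the prime factorization of $n$. If $n$ has an odd prime divisor $p$, then $\mathbf{G}_p$ is a subvariety of $\mathbf{G}_n$, so $L_q(\mathbf{G}_p)$ sits inside $L_q(\mathbf{G}_n)$ as the principal ideal generated by $\mathbf{G}_p$; here I would invoke the classical fact that $L_q(\mathbf{G}_p)$ has continuum cardinality for every odd prime $p$, typically proved by producing continuum many quasivarieties of nilpotent groups of class~$2$ and exponent~$p$. The only remaining case is $n = 2^k$ with $k \geq 2$, for which the same argument applies with $\mathbf{G}_4$ in place of $\mathbf{G}_p$, invoking the analogous continuum cardinality of $L_q(\mathbf{G}_4)$. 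The matching upper bound $|L(\mathbf{Sr}_n)| \leq 2^{\aleph_0}$ is trivial, as there are only countably many semiring polynomials.

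The algebraic core of the corollary is Theorem \ref{thm:main}; all that remains is the group-theoretic cardinality input. Accordingly, the principal obstacle is essentially bibliographic: pinpointing the correct references for the continuum cardinality of $L_q(\mathbf{G}_p)$ (odd prime~$p$) and of $L_q(\mathbf{G}_4)$. A self-contained alternative would be to exhibit directly, inside $\mathbf{G}_3$ and $\mathbf{G}_4$, an uncountable family of finite $p$-groups of nilpotency class~$2$ whose generated quasivarieties are pairwise distinct -- a construction of standard flavor that I would rather delegate to a citation than reproduce in this paper.
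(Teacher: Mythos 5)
Your proposal is correct and follows essentially the same route as the paper: reduce via Theorem~\ref{thm:main} to showing that $L_q(\mathbf{G}_n)$ has the cardinality of the continuum, then split on whether $n$ has an odd prime divisor or is divisible by $4$. The bibliographic input you deferred is supplied in the paper by Sapir's theorem \cite{Sapir84} --- a locally finite group variety with only countably many subquasivarieties has all its nilpotent members abelian --- applied to the (locally finite) varieties generated by the Heisenberg group $H_p$ of exponent $p$ and the quaternion group $Q_8$ of exponent $4$.
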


\begin{proof}
By Theorem~\ref{thm:main}, it suffices to prove that for every $n\ge 3$, the variety $\mathbf{G}_n$ of all groups of exponent dividing $n$ has continuum many subquasivarieties. The latter fact follows from a characterization of locally finite semigroup varieties with countably many subquasivarieties obtained by Mark Sapir~\cite{Sapir84}, more precisely, for its specialization to locally finite group varieties. Recall that a variety is \emph{locally finite} if each of its finitely generated members is finite. Part of \cite[Theorem 1]{Sapir84} asserts that if a locally finite group variety has countably many subquasivarieties, then all nilpotent groups in the variety must be abelian.

Every variety generated by a finite group is known to be locally finite; see \cite[Theorem 10.16]{BuSa81}). Therefore, Sapir's result quoted in the previous paragraph ensures that for any finite nilpotent non-abelian group $N$, the variety $\var\{N\}$ has continuum many subquasivarieties. It remains to verify that for every $n\ge 3$, the variety $\mathbf{G}_n$ contains a finite nilpotent non-abelian group. Any number $n\ge 3$ is divisible by either an odd prime $p$ or 4. In the former case, $\mathbf{G}_n$ contains the \emph{Heisenberg group modulo $p$}, that is, the $p^3$-element group given by the presentation
\begin{equation}\label{eq:heisenberg}
H_p:=\langle a,b,c \mid a^p=b^p=c^p=1,\, ac=ca,\ bc=cb,\ ab=bac \rangle.
\end{equation}
It is known (and easy to see) that for each odd prime $p$, the group $H_p$ is nilpotent and non-abelian. If $n$ is divisible by 4, then $\mathbf{G}_n$ contains the 8-element \emph{quaternion group}
\[
Q_8:=\langle a,b \mid a^4 = 1,\ a^2 = b^2,\ b=aba\rangle,
\]
which also is nilpotent and non-abelian.
\end{proof}

The classification of the subvarieties of the varieties $\mathbf{Sr}_1$ and $\mathbf{Sr}_2$ obtained in \cite{GPZ05,Pas05,RZW17} shows that each subvariety of $\mathbf{Sr}_1$ or $\mathbf{Sr}_2$ is generated by a finite \ais{} and is defined by finitely many identities. Since both the set of all isomorphism types of finite \ais{}s and the set of all finite collections of semiring identities are countable, Corollary~\ref{cor:cardinal} readily implies that none of these properties extend to the subvarieties of $\mathbf{Sr}_n$ for $n\ge 3$.

\subsection{Non-modularity of the lattice $L(\mathbf{Sr}_n)$ for certain $n$}

The lattices $L(\mathbf{Sr}_1)$ and $L(\mathbf{Sr}_2)$ were shown to be distributive in \cite{GPZ05,Pas05} and \cite{RZW17}, respectively. It is natural to ask whether distributivity or at least modularity persists in the lattices $L(\mathbf{Sr}_n)$ for $n\ge 3$. For context, we mention that the lattice of all varieties of semigroups satisfying identity \eqref{eq:srn}, which defines the variety $\mathbf{Sr}_n$, is modular for every $n$; see \cite{Pas90,Pas91} and~\cite{PR90}.

The lattice isomorphism established in Theorem~\ref{thm:main} suggests looking for the situation for the lattices $L_q(\mathbf{G}_n)$. Unfortunately, current knowledge about lattices of group quasivarieties is insufficient to fully resolve the question we stated. What we have at the moment is the following partial result.

\begin{corollary}
\label{cor:nonmodular}
For every $n$ divisible by either the square of an odd prime or by $4p$, where $p$ is any prime, the variety $\mathbf{M}_n$ \textup(and hence, the variety $\mathbf{Sr}_n$\textup) has non-modular subvariety lattice.
\end{corollary}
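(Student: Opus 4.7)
The plan is to leverage Theorem~\ref{thm:main}: since the interval $[\mathbf{M}_1,\mathbf{M}_n]$ of $L(\mathbf{Sr}_n)$ is isomorphic as a lattice to $L_q(\mathbf{G}_n)$, the non-modularity of $L(\mathbf{M}_n)$ (and hence of $L(\mathbf{Sr}_n)$) will follow once we establish the non-modularity of $L_q(\mathbf{G}_n)$. So the task reduces to showing that $L_q(\mathbf{G}_n)$ is non-modular whenever $n$ satisfies the stated divisibility hypothesis.

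To produce non-modularity in $L_q(\mathbf{G}_n)$, I would exhibit a finite group $G\in\mathbf{G}_n$ for which $L_q(\qvar G)$ already contains a copy of the pentagon $N_5$. This suffices because $L_q(\qvar G)$ is the principal ideal generated by $\qvar G$ in $L_q(\mathbf{G}_n)$, so any pentagon inside the former lifts to a pentagon inside the latter. The candidate groups are chosen so that their exponent divides $n$: if $n$ is divisible by $p^2$ for an odd prime $p$, take $G$ to be a finite non-abelian nilpotent group of exponent $p^2$, for instance the non-abelian group of order $p^3$ and exponent $p^2$; if $n$ is divisible by $4p$ for some prime $p$, take $G$ to be a suitable finite nilpotent group whose exponent divides $4p$, such as $Q_8\times\mathbb{Z}_p$, whose exponent is $\mathrm{lcm}(4,p)$ and divides $4p$.

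The main obstacle is the group-theoretic step: one must invoke nontrivial results on lattices of quasivarieties of nilpotent groups asserting that $L_q(\qvar G)$ is non-modular for the chosen $G$. Such results were developed in the work of Budkin, Fedorov, and others; Gorbunov's monograph \emph{Algebraic theory of quasivarieties} provides a convenient overview. A secondary subtlety is that the choice of witness group matters: the Heisenberg group $H_p$ used in Corollary~\ref{cor:cardinal} need not have a non-modular quasivariety lattice, so one cannot simply recycle the witnesses from the continuum result. The divisibility hypotheses ``$n$ divisible by $p^2$'' or ``$n$ divisible by $4p$'' are imposed precisely to guarantee the availability, inside $\mathbf{G}_n$, of a finite nilpotent group whose quasivariety lattice is known to fail modularity. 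Once such a $G$ is in hand, the embedding $\qvar G\hookrightarrow\mathbf{G}_n$ and Theorem~\ref{thm:main} immediately yield the desired conclusion for $L(\mathbf{M}_n)$ and therefore for $L(\mathbf{Sr}_n)$.
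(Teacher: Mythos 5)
Your reduction is exactly the paper's: by Theorem~\ref{thm:main} it suffices to find a non-modular sublattice in $L_q(\mathbf{G}_n)$, and any pentagon inside $L_q(\qvar G)$ for a single $G\in\mathbf{G}_n$ does the job. The gap is in the group-theoretic core, which you defer entirely to ``nontrivial results of Budkin, Fedorov, and others'' without identifying a result that applies to the witnesses you actually name. The documented examples are quite specific: for $n$ divisible by $p^2$ ($p$ an odd prime), the Budkin--Gorbunov pentagon lives in $L_q(\qvar\{H_p\times C_{p^2}\})$ and consists of $\qvar\{C_p\}$, $\qvar\{C_{p^2}\}$, $\qvar\{H_p\}$, $\qvar\{\widetilde H_p\}$, $\qvar\{H_p\times C_{p^2}\}$, where $H_p$ is the Heisenberg group of exponent $p$ and $\widetilde H_p$ is a particular subgroup of the product. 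There is no cited result asserting non-modularity of $L_q(\qvar G)$ for your replacement $G$, the extraspecial group of order $p^3$ and exponent $p^2$; you would have to prove it, and you do not.

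The $4p$ case is worse. The known example for odd $p$ (Budkin--Gorbunov, Example 2) uses the dihedral group $D_{2p}$ together with $C_{4p}$ --- note $D_{2p}$ is not nilpotent, so restricting attention to nilpotent witnesses already points you away from the relevant construction. And for $p=2$ your recipe produces $Q_8\times\mathbb{Z}_2$, which has exponent $4$, not $8$; the only documented non-modular sublattice in this range (F\"edorov's hexagon) uses the cyclic group $C_8$ and other groups of exponent $8$, so it genuinely requires $8\mid n$. If non-modularity could be witnessed by a group of exponent $4$, the corollary's hypothesis would read ``divisible by $4$'' rather than ``divisible by $4p$''; as the paper notes, current knowledge does not settle modularity of $L_q(\mathbf{G}_4)$, so your exponent-$4$ witness is unsupported and quite possibly unusable. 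To repair the argument you must either reproduce the three explicit sublattices (pentagon for $p^2$, pentagon for $4p$ with $p$ odd, hexagon for $8$) or cite the precise sources for those specific groups, as the paper does.
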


\begin{proof}
Let $p$ be an odd prime. An example of a non-modular sublattice in the lattice $L_q(\mathbf{G}_{p^2})$ is exhibited in \cite[Example 1]{BuGo75}; see \cite[Theorem 3.1.15]{Budkin:2002} for a detailed proof. Since both the paper \cite{BuGo75} and the monograph \cite{Budkin:2002} were published in Russian and may not be readily accessible to the reader, we reproduce the example here. Our notation differs from that in \cite[Example 1]{BuGo75} and \cite[Theorem 3.1.15]{Budkin:2002}.

Let $C_p$ and $C_{p^2}:=\langle c\mid c^{p^2}=1\rangle$ be the cyclic groups with $p$ and $p^2$ elements, respectively. Consider the direct product $H_p\times C_{p^2}$, where $H_p$ is the Heisenberg group modulo $p$ defined by \eqref{eq:heisenberg}. Clearly, the exponent of this product is $p^2$ whence it lies in the variety $\mathbf{G}_{p^2}$. Let $\widetilde{H}_p$ stand for the subgroup of $H_p\times C_{p^2}$, generated by the pairs $(a,c)$ and $(b,1)$. Then the quasivarieties generated by each of the groups $C_p$, $C_{p^2}$, $H_p$, $\widetilde{H}_p$, and $H_p\times C_{p^2}$, form a 5-element non-modular sublattice in $L_q(\mathbf{G}_{p^2})$; see Figure~\ref{fig:pentagon}(left).

\begin{figure}[ht]
\begin{center}
\unitlength=1mm
\begin{picture}(35,25)(0,0)
\put(10,10){\line(2,-1){10}}
\put(20,5){\line(1,1){10}}
\put(20,25){\line(1,-1){10}}
\put(10,10){\line(0,1){10}}
\put(10,20){\line(2,1){10}}
\put(10,10){\circle*{1}}
\put(20,25){\circle*{1}}
\put(10,20){\circle*{1}}
\put(20,5){\circle*{1}}
\put(30,15){\circle*{1}}
\put(21,3){$\qvar\{C_p\}$}
\put(-5,8){$\qvar\{C_{p^2}\}$}
\put(-4.7,20.5){$\qvar\{\widetilde{H}_p\}$}
\put(31,13){$\qvar\{H_p\}$}
\put(21,26){$\qvar\{H_p\times C_{p^2}\}$}
\end{picture}
\qquad\qquad\qquad\qquad
\begin{picture}(35,25)(0,0)
\put(10,10){\line(2,-1){10}}
\put(20,5){\line(1,1){10}}
\put(20,25){\line(1,-1){10}}
\put(10,10){\line(0,1){10}}
\put(10,20){\line(2,1){10}}
\put(10,10){\circle*{1}}
\put(20,25){\circle*{1}}
\put(10,20){\circle*{1}}
\put(20,5){\circle*{1}}
\put(30,15){\circle*{1}}
\put(21,3){$\qvar\{C_{2p}\}$}
\put(-5,8){$\qvar\{C_{4p}\}$}
\put(-6.2,20.5){$\qvar\{\widetilde{D}_{2p}\}$}
\put(31,13){$\qvar\{D_{2p}\}$}
\put(21,26){$\qvar\{D_{2p}\times C_{4p}\}$}
\end{picture}
\caption{5-element non-modular sublattices in the lattices $L_q(\mathbf{G}_{p^2})$ (left) and $L_q(\mathbf{G}_{4p})$ (right), where $p$ is an odd prime}\label{fig:pentagon}
\end{center}
\end{figure}
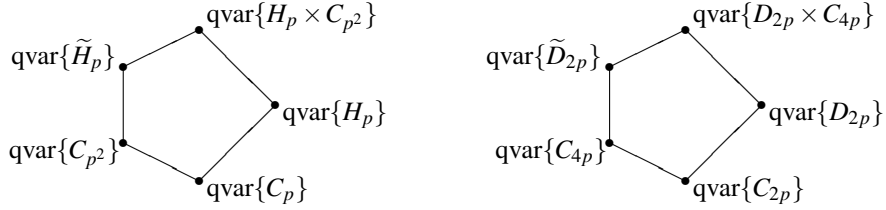

An example of a non-modular sublattice in the lattice $L_q(\mathbf{G}_{4p})$ can be extracted from \cite[Example 2]{BuGo75}. We present it here, using much simplified notation.

Consider the \emph{dihedral group} of order $2p$ given by the presentation
\[
D_{2p}:=\langle a,b \mid a^p=b^2=1,\ ab=ba^{p-1}\rangle.
\]
If $C_{4p}:=\langle c\mid c^{4p}=1\rangle$ is the $4p$-element cyclic group, then the direct product $D_{2p}\times C_{4p}$ has exponent $4p$, and so it lies in the variety $\mathbf{G}_{4p}$.  Let $\widetilde{D}_{2p}$ stand  for the subgroup of $D_{2p}\times C_{4p}$ generated by the pairs $(a,1)$ and $(b,c^p)$. Then the quasivarieties generated by each of the following groups: the $2p$-element cyclic group $C_{2p}$, $C_{4p}$, $D_{2p}$, $\widetilde{D}_{2p}$, and $D_{2p}\times C_{4p}$, form a 5-element non-modular sublattice in $L_q(\mathbf{G}_{4p})$; see Figure~\ref{fig:pentagon}(right).

It remains to present a non-modular sublattice in $L_q(\mathbf{G}_{8})$. Such an example (constructed by F\"edorov~\cite{Fed}) is reproduced in the monograph \cite{Budkin:2002} as Example 4.5.21. Given two positive integers $r,s$, consider the group given by the presentation
\[
H_{rs}:=\langle a,b \mid a^{2^r}=b^{2^s}=(a^{2^r-1}b^{2^s-1}ab)^2=1 \rangle.
\]
Then the quasivarieties generated by each of the following groups: the 4-element cyclic group $C_4$, the 8-element cyclic group $C_{8}$, $H_{22}$, $H_{33}$, $H_{23}$, and $H_{22}\times C_8$, form a 6-element non-modular sublattice in $L_q(\mathbf{G}_{8})$; see Figure~\ref{fig:hexagon}.
\end{proof}

\begin{figure}[htb]
\begin{center}
\unitlength=1mm
\begin{picture}(35,35)(0,5)
\put(10,10){\line(2,-1){10}}
\put(20,5){\line(4,5){12}}
\put(20,35){\line(4,-5){12}}
\put(10,10){\line(0,1){10}}
\put(10,20){\line(0,1){10}}
\put(10,30){\line(2,1){10}}
\put(10,10){\circle*{1}}
\put(20,35){\circle*{1}}
\put(10,20){\circle*{1}}
\put(10,30){\circle*{1}}
\put(20,5){\circle*{1}}
\put(32,20){\circle*{1}}
\put(21,3){$\qvar\{C_4\}$}
\put(-4,8){$\qvar\{C_{8}\}$}
\put(-5,20.5){$\qvar\{H_{33}\}$}
\put(-5,30.5){$\qvar\{H_{23}\}$}
\put(33,17){$\qvar\{H_{22}\}$}
\put(21,36){$\qvar\{H_{22}\times C_8\}$}
\end{picture}
\caption{A 6-element non-modular sublattice in the lattice $L_q(\mathbf{G}_{8})$}\label{fig:hexagon}
\end{center}
\end{figure}
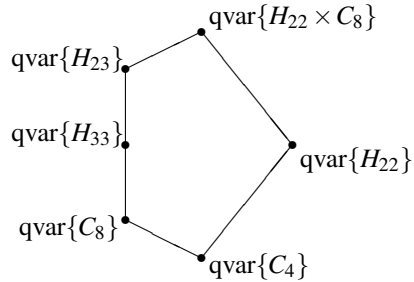

\section{Comparison between our approach and a specialization of Jackson's}

This section, unlike the rest of the paper, presupposes a somewhat deeper background in universal algebra.

As mentioned in the introduction, Jackson \cite{Jackson08} developed a very general method for translating universal Horn logic of partial algebras into equational logic of total algebras. In Sections \ref{subsec:UH}--\ref{subsec:groups}, we reproduce some constructions and facts from~\cite{Jackson08}, restricting them to total algebras. These restrictions suffice for our present purpose of clarifying which of our results could have been deduced from appropriate specializations of Jackson's and which could not; this is done in Section~\ref{subsec:comparison}.

We follow the terminology and notation of~\cite{Jackson08}, up to a few inessential details.

\subsection{Universal Horn formulas and universal Horn classes}
\label{subsec:UH}
A \emph{signature} $\mathrsfs{F}$ is a collection of symbols together with a map $\mathrsfs{F}\to\{0,1,\dots\}$ that assigns to each symbol its \emph{arity}, a non-negative integer. An \emph{$\mathrsfs{F}$-algebra} is a pair $(A;\,F)$, where $A$ is a set (the \emph{universe}) and $F$ is a collection of operations on $A$ which are in one-to-one correspondence with the symbols of $\mathrsfs{F}$ such that each $n$-ary operation in $F$ corresponds to a symbol of arity $n$. We use the same notation for an operation in $F$ and the corresponding symbol in $\mathrsfs{F}$, and refer to the elements of $\mathrsfs{F}$ as the \emph{fundamental} operations of~$(A;\,F)$.

$\mathrsfs{F}$-\emph{terms} are built from variables and the fundamental operations in the usual way. When we write $t(x,y)$ for a term we assume that the variables $x$ and $y$ appear explicitly in the construction of the term.

\emph{Universal Horn formulas} (UH formulas, for short) of signature $\mathrsfs{F}$ are universally quantified formulas of one of the two forms:
\begin{gather}
\label{eq:Horn1st}
{u}_1=  {v}_1\ \&\ {u}_2=  {v}_2\ \&\ \cdots\ \&\ {u}_m=  {v}_m\longrightarrow {u}=  {v},\\
\label{eq:Horn2nd}
\neg({u}_1=  {v}_1) \lor \neg({u}_2=  {v}_2) \lor \cdots \lor \neg({u}_m={v}_m),
\end{gather}
where  ${u}_1,{v}_1,{u}_2,{v}_2,\dots,{u}_m,{v}_m,{u},{v}$ are $\mathrsfs{F}$-terms. UH formulas of the form \eqref{eq:Horn1st} are nothing but quasi-identities.

Satisfaction of UH formulas by an $\mathrsfs{F}$-algebra is defined in the usual manner. The class of all $\mathrsfs{F}$-algebras that satisfy all UH formulas from a given set $\Sigma$ is called the \emph{universal Horn class} (UH class, for short) defined by $\Sigma$. Note that no UH formula of the form \eqref{eq:Horn2nd} can be satisfied by the one-element $\mathrsfs{F}$-algebra. Therefore, UH classes containing the one-element $\mathrsfs{F}$-algebra are quasivarieties.

The UH classes of $\mathrsfs{F}$-algebras form a complete lattice under class inclusion, provided the $\mathrsfs{F}$-algebra with empty universe is allowed, which we will assume in this section.

\subsection{Pointed semidiscriminator extensions} If $\mA=(A;\,F)$ is an $\mathrsfs{F}$-algebra and $\rhd\notin\mathrsfs{F}$, then $\mA^\rhd:=(A;\,F\cup\{\rhd\})$ where the operation $\rhd$ is defined by $a\rhd b:=b$ for all $a,b\in A$.

Let $\mA:=(A;\,F)$ be an $\mathrsfs{F}$-algebra and let $\infty\notin A$ and $\wedge\notin\mathrsfs{F}$. The \emph{flat extension} of $\mA$ is the algebra $\mA^\flat:=(A\cup\{\infty\};\,F\cup\{\wedge\})$ where the operation $\wedge$ is defined by
\begin{equation*}
\label{eq:flatgen}
a\wedge b:=\begin{cases}
\infty&\text{if } a\ne b,\\
a&\text{if } a=b,
\end{cases}
\quad\text{for all}\ a,b\in A\cup\{\infty\},
\end{equation*}
and each operation in $\mathrsfs{F}$ returns the same values as in $\mA$ whenever all of its arguments come from the set $A$, and returns $\infty$ if at least one of its arguments is $\infty$.

The \emph{pointed semidiscriminator extension} $\ps(\mA)$ of an $\mathrsfs{F}$-algebra $\mA$ is defined as $(\mA^\rhd)^\flat$, that is, as the flat extension of the algebra $\mA^\rhd$. Thus, if $\mA=(A;\,F)$, then the universe of the algebra $\ps(\mA)$ is $A\cup\{\infty\}$ and the signature of $\ps(\mA)$ is $\mathrsfs{F}\cup\{\rhd,\wedge\}$.

Notice that by the definition of a flat extension, the operation $\rhd$ on $\ps(\mA)$ returns $\infty$ if at least one of its arguments is $\infty$; otherwise, it returns its second argument. Therefore,
\[
(a\wedge b)\rhd c=\begin{cases}
c &\text{if } a=b\ne\infty,\\
\infty&\text{otherwise,}
\end{cases}
\quad\text{for all}\ a,b,c\in A\cup\{\infty\}.
\]
In terminology of~\cite{Jackson08}, this means that the term $(x\wedge y)\rhd z$ is a \emph{pointed semidiscriminator term} in the algebra $\ps(\mA)$. The presence of such a term underlies many ``positive'' features of pointed semidiscriminator extensions and makes the construction $\ps(\mA)$ very useful.

\begin{remark}
\label{rem:flat}
If an $\mathrsfs{F}$-algebra $\mA$ satisfies the identity $t(x,y)=y$ for some $\mathrsfs{F}$-term $t(x, y)$, then the pointed semidiscriminator extension $\ps(\mA)$ is term equivalent to the flat extension $\mA^\flat$. In particular, the latter extension admits $t(x\wedge y,z)$ as a pointed semidiscriminator term. A simple example relevant to the later discussion in this section is given by any group of exponent dividing $n$, for which one may take $t(x,y):= x^n y$.
\end{remark}

\subsection{The maps $\mathbf{H}\mapsto\mathbf{H}^\flat$ and $\mathbf{H}\mapsto\ps(\mathbf{H}$)}
For a UH class $\mathbf{H}$ of $\mathrsfs{F}$-algebras, let
\begin{align*}
\mathbf{H}^\flat&:=\var\{\mA^\flat\mid \mA\in\mathbf{H}\},\\
\ps(\mathbf{H})&:=\var\{\ps(\mA)\mid \mA\in\mathbf{H}\}.
\end{align*}

The following fact is a restriction of Theorem 5.3 in \cite{Jackson08} to total algebras. The theorem, in turn, is deduced from Theorem 3.3, the key result of~\cite{Jackson08}.
\begin{fact}
\label{fact:subirr}
For every UH class\/ $\mathbf{H}$, the subdirectly irreducible members of the variety $\ps(\mathbf{H})$ are simple and are, up to isomorphism, precisely the pointed semidiscriminator extensions of members of\/ $\mathbf{H}$.
\end{fact}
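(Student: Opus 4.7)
The plan is to establish the two halves of the statement separately, using the term $d(x,y,z) := (x \wedge y) \rhd z$ as the main tool; in any algebra of the form $\ps(\mA)$, this term evaluates to $z$ when $x = y \ne \infty$ and to $\infty$ in every other case.

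First, I would prove simplicity of each $\ps(\mA)$ with $\mA \in \mathbf{H}$. Suppose $\rho$ is a non-diagonal congruence and fix $a \mathrel{\rho} b$ with $a \ne b$. At least one of $a,b$ is distinct from $\infty$; say $a \in A$. For every element $c$ in the universe of $\ps(\mA)$, we have $d(a,a,c) = c$ while $d(a,b,c) = \infty$, so the congruence forces $c \mathrel{\rho} \infty$. Since $c$ is arbitrary, $\rho$ is the universal relation, proving $\ps(\mA)$ simple.

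For the converse, let $\mS$ be an SI member of $\ps(\mathbf{H})$. The next step is to show that $\ps(\mathbf{H})$ is congruence distributive by exhibiting Jónsson terms built from $d$ and the fundamental operations in $\mathrsfs{F}$; J\'onsson's lemma then places $\mS$ in the closure of $\{\ps(\mA) \mid \mA \in \mathbf{H}\}$ under ultraproducts, subalgebras, and homomorphic images. An ultraproduct $\prod_{\mathcal{U}} \ps(\mA_i)$ is canonically isomorphic to $\ps\bigl(\prod_{\mathcal{U}} \mA_i\bigr)$---the $\infty$-coordinates cohere into a single $\infty$---and $\prod_{\mathcal{U}} \mA_i$ lies in $\mathbf{H}$ because UH classes are closed under ultraproducts. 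A subalgebra $\mT$ of such a $\ps(\mB)$ with $|\mT| > 1$ must contain $\infty$, since any two distinct non-$\infty$ elements of $\mT$ have $\wedge$-product $\infty$; one then checks that $\mB' := \mT \cap \mB$ is closed under every fundamental operation of $\mB$, so $\mB' \in \mathbf{H}$ and $\mT = \ps(\mB')$. By the first part, each such $\ps(\mB')$ is simple, so its only nontrivial homomorphic image is itself. Since $\mS$ is SI and hence nontrivial, $\mS \cong \ps(\mB')$ for some $\mB' \in \mathbf{H}$, and the simplicity clause is automatic.

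The principal technical obstacle will be the verification of congruence distributivity for $\ps(\mathbf{H})$, which is what makes J\'onsson's lemma applicable. The Jónsson chain is expected to be assembled from $d$---which behaves as a ``conditional equality test'' returning its third argument when the first two agree and the absorbing element $\infty$ otherwise---together with the fundamental operations, which propagate $\infty$ uniformly and therefore interact predictably with $d$. A secondary, easier concern will be tidying the degenerate cases in the subalgebra and ultraproduct reductions (notably the trivial subalgebra $\{\infty\}$, and the empty algebra if it is admitted in the ambient universe), so that the extracted base $\mB'$ genuinely lies in $\mathbf{H}$; here the SI assumption on $\mS$ is precisely what rules the degenerate alternatives out.
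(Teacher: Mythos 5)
Your first half is correct: with $d(x,y,z):=(x\wedge y)\rhd z$, the computation $c=d(a,a,c)\mathrel{\rho}d(a,b,c)=\infty$ does show that each $\ps(\mA)$ is simple, and it is essentially the same calculation as the paper's Lemma~\ref{lem:flatsimple}. (Note that the paper gives no proof of Fact~\ref{fact:subirr} at all; it is quoted as a restriction of Theorem 5.3 of \cite{Jackson08}, which in turn rests on Jackson's Theorem 3.3.)

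The converse half, however, breaks at its central step: the variety $\ps(\mathbf{H})$ is \emph{not} congruence distributive in the cases that matter, so no J\'onsson chain exists and J\'onsson's lemma cannot be invoked. A first warning sign is that $d(x,y,x)=x$ already fails (for $x\ne y$ the left-hand side is $\infty$), so $d$ cannot serve in a J\'onsson chain. More decisively, whenever $\mathbf{H}$ contains a one-element algebra --- as every quasivariety does, in particular every $\mathbf{H}\in L_q(\mathbf{G}_n)$ --- its pointed semidiscriminator extension is a two-element algebra in which every basic operation collapses to the semilattice operation $\wedge$; hence $\ps(\mathbf{H})$ contains a subvariety term-equivalent to the variety of semilattices (in this paper's setting it is exactly $\mathbf{M}_1\subseteq\mathbf{G}_n^\flat$), and by Freese--Nation congruence lattices of semilattices satisfy no nontrivial lattice identity, so $\ps(\mathbf{H})$ is not even congruence modular. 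Consequently the reduction of an arbitrary subdirectly irreducible member of $\ps(\mathbf{H})$ to the class of homomorphic images of subalgebras of ultraproducts of the generators is unavailable, and the whole converse direction collapses. Your auxiliary reductions (ultraproducts of pointed semidiscriminator extensions are pointed semidiscriminator extensions of ultraproducts; a subalgebra with more than one element is $\ps(\mB')$ for a subalgebra $\mB'$ of the base) are fine, but they only treat the algebras you would have obtained had J\'onsson's lemma applied. Jackson's actual argument replaces this step by a direct congruence-theoretic analysis of an arbitrary subdirectly irreducible $\mS$ in the variety: the semidiscriminator term is used on the monolith to locate an absorbing element of $\mS$ and to show that the remaining elements form an $\mathrsfs{F}$-algebra lying in $\mathbf{H}$ (closure of $\mathbf{H}$ under subalgebras, products and ultraproducts is what gets used here, in place of J\'onsson's lemma), after which $\mS$ is identified with its pointed semidiscriminator extension. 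A corrected proof would have to supply an argument of that kind.
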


From this, one readily derives the next fact, which is a restriction of \cite[Corollary 5.4]{Jackson08}.

\begin{fact}
\label{fact:embedding}
The map\/ $\mathbf{H}\mapsto\ps(\mathbf{H})$ is a lattice isomorphism from the lattice of all UH classes of $\mathrsfs{F}$-algebras into the lattice of varieties of\/ $(\mathrsfs{F}\cup\{\rhd,\wedge\})$-algebras.
\end{fact}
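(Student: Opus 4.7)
The plan is to realize the lattice embedding by constructing an explicit inverse map and verifying that both maps preserve arbitrary infima and suprema. For a variety $\mathbf{V}$ of $(\mathrsfs{F}\cup\{\rhd,\wedge\})$-algebras, I would set $\sharp(\mathbf{V}) := \{\mA : \ps(\mA)\in\mathbf{V}\}$, mirroring the construction used in the proof of Theorem~\ref{thm:main}. The first key computation is that $\sharp(\ps(\mathbf{H})) = \mathbf{H}$ for every UH class $\mathbf{H}$; its nontrivial direction hinges on the fact that $\ps(\mA)$ is simple for \emph{every} $\mathrsfs{F}$-algebra $\mA$, which one extracts from Fact~\ref{fact:subirr} applied to the UH class of all $\mathrsfs{F}$-algebras. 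Indeed, once $\ps(\mA)$ is known to be simple, the assumption $\ps(\mA)\in\ps(\mathbf{H})$ makes $\ps(\mA)$ subdirectly irreducible in $\ps(\mathbf{H})$, so Fact~\ref{fact:subirr} gives $\ps(\mA)\cong\ps(\mB)$ for some $\mB\in\mathbf{H}$; restricting the isomorphism to non-$\infty$ elements yields $\mA\cong\mB$, and closure of $\mathbf{H}$ under isomorphism gives $\mA\in\mathbf{H}$. Injectivity of $\ps$ then follows at once.

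For meet preservation, order-preservation gives $\ps(\bigcap_i \mathbf{H}_i)\subseteq \bigcap_i \ps(\mathbf{H}_i)$. The reverse inclusion follows from Birkhoff's Subdirect Decomposition Theorem: every subdirectly irreducible member of $\bigcap_i\ps(\mathbf{H}_i)$ is simple, lies in each $\ps(\mathbf{H}_i)$, and hence, by Fact~\ref{fact:subirr}, is isomorphic to $\ps(\mA_i)$ for some $\mA_i\in\mathbf{H}_i$; the isomorphism classes of the $\mA_i$ must coincide, producing a common representative $\mA\in\bigcap_i\mathbf{H}_i$, so the given SI belongs to $\ps(\bigcap_i \mathbf{H}_i)$.

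The main obstacle is join preservation, for which I need to show that $\sharp(\mathbf{V})$ is itself a UH class. Closure under $\mathbf{I}$ and $\mathbf{S}$ is immediate from the functoriality of $\ps$. For ultraproducts, an embedding $\ps(\prod_{\mathcal{U}}\mA_i)\hookrightarrow\prod_{\mathcal{U}}\ps(\mA_i)$ is given on the classes of $\mathrsfs{F}$-tuples by the diagonal inclusion together with $\infty\mapsto [(\infty_i)]$; that this map preserves $\wedge$ relies on the ultrafilter property, since two distinct non-$\infty$ classes differ on an index set in $\mathcal{U}$, forcing their meet in the codomain to coincide with the image of $\infty$. The delicate case is direct products, where $\ps$ does \emph{not} commute with $\prod$; the remedy is the surjective homomorphism $\prod_i\ps(\mA_i)\twoheadrightarrow\ps(\prod_i\mA_i)$ that sends every tuple containing at least one coordinate equal to $\infty_i$ to $\infty$ and acts as the identity on the remaining tuples. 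Once $\sharp(\mathbf{V})$ is known to be a UH class, the join inclusion $\ps(\bigvee_i\mathbf{H}_i)\subseteq\bigvee_i\ps(\mathbf{H}_i)$ is obtained by applying $\sharp$ to $\mathbf{V}:=\bigvee_i\ps(\mathbf{H}_i)$, which yields $\bigvee_i\mathbf{H}_i\subseteq\sharp(\mathbf{V})$, and then combining with the trivial inclusion $\ps(\sharp(\mathbf{V}))\subseteq\mathbf{V}$. The most error-prone step will be the case analysis verifying that this collapsing homomorphism respects $\wedge$ and $\rhd$ on mixed tuples; all remaining verifications are routine consequences of Fact~\ref{fact:subirr} and Birkhoff's theorem.
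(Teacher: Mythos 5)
Your proposal is correct. Note, however, that the paper does not actually prove Fact~\ref{fact:embedding}: it states that the fact is ``readily derived'' from Fact~\ref{fact:subirr} and cites \cite[Corollary 5.4]{Jackson08}, whose original proof proceeds by Jackson's syntactic translation of Horn formulas. What you have written is, in effect, the general-signature version of the argument the paper carries out in detail for the group case: your inverse map $\sharp$ and the verification that $\sharp(\mathbf{V})$ is a UH class (identity embedding for $\mathbf{S}$, the collapsing surjection $\prod_i\ps(\mA_i)\twoheadrightarrow\ps(\prod_i\mA_i)$ for products, and the ultrafilter argument for $\mathbf{P}_u$) reproduce Lemma~\ref{lem:v2g} almost verbatim, while your use of simplicity of $\ps(\mA)$, Fact~\ref{fact:subirr}, and Birkhoff's subdirect decomposition to get $\sharp(\ps(\mathbf{H}))=\mathbf{H}$ and meet preservation parallels the proof of Theorem~\ref{thm:main}. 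Two small points deserve explicit mention in a write-up: (i) the step from ``$\sharp(\mathbf{V})$ is closed under $\mathbf{I}$, $\mathbf{S}$, $\mathbf{P}$, $\mathbf{P}_u$'' to ``$\sharp(\mathbf{V})$ is a UH class'' invokes the Mal'cev-type characterization of universal Horn classes (the analogue of \cite[Theorem V.2.25]{BuSa81} without the trivial-algebra clause), which should be cited; and (ii) since the paper admits the empty $\mathrsfs{F}$-algebra in this section, the phrase ``precisely the pointed semidiscriminator extensions'' in Fact~\ref{fact:subirr} and your recovery of $\mA$ from $\ps(\mA)$ (via the definability of $\infty$ as $a\wedge b$ for $a\ne b$) need the one-line caveat that $\ps(\varnothing)$ is the trivial algebra. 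Neither issue affects the substance of the argument.
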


By Remark~\ref{rem:flat}, if an UH class $\mathbf{H}$ satisfies an identity of the form $t(x,y)=y$ for some term $t(x, y)$, one can use flat extensions in place of pointed semidiscriminator extensions in Fact~\ref{fact:subirr}. From this, one can obtain the following analogue of Fact~\ref{fact:embedding} (not explicitly stated in \cite{Jackson08}):

\begin{fact}
\label{fact:flatembedding}
For each $\mathrsfs{F}$-term $t(x,y)$, the map\/ $\mathbf{H}\mapsto\mathbf{H}^\flat$ is a lattice isomorphism from the lattice of all UH classes of $\mathrsfs{F}$-algebras satisfying the identity $t(x,y)=y$ into the lattice of varieties of\/ $(\mathrsfs{F}\cup\{\wedge\})$-algebras.
\end{fact}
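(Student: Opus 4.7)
My plan is to deduce Fact~\ref{fact:flatembedding} from Fact~\ref{fact:embedding} by exploiting the term equivalence highlighted in Remark~\ref{rem:flat}. The key observation is that for every $\mathrsfs{F}$-algebra $\mA$ satisfying $t(x,y)=y$, the operation $\rhd$ of $\ps(\mA)$ coincides with the interpretation of the $\mathrsfs{F}$-term $t(x,y)$ in $\mA^\flat$: on inputs from $A$, both return the second argument (by the identity in $\mA$), while on any input involving $\infty$, both return $\infty$ (since all $\mathrsfs{F}$-operations are strict with respect to $\infty$ in a flat extension). Consequently, $\ps(\mA)$ is obtained from $\mA^\flat$ by adjoining a single term-definable operation, making the two algebras term equivalent.

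Next, I would lift this correspondence to the level of varieties. Since the defining rule $\rhd:=t(x,y)$ is uniform, the enrichment $\mB\mapsto(\mB;\,\rhd:=t^{\mB})$ commutes with the class operators H, S, P and ultraproducts. Hence, for every UH class $\mathbf{H}$ satisfying $t(x,y)=y$, the variety $\ps(\mathbf{H})$ is precisely the enrichment of $\mathbf{H}^\flat$ by this rule, and conversely, $\mathbf{H}^\flat$ is the $(\mathrsfs{F}\cup\{\wedge\})$-reduct of $\ps(\mathbf{H})$. In particular, for any two such UH classes $\mathbf{H}_1,\mathbf{H}_2$, the inclusion $\mathbf{H}_1^\flat\subseteq\mathbf{H}_2^\flat$ holds if and only if $\ps(\mathbf{H}_1)\subseteq\ps(\mathbf{H}_2)$ does, since both express the same containment of the underlying $(\mathrsfs{F}\cup\{\wedge\})$-algebras.

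The conclusion is then immediate. By Fact~\ref{fact:embedding}, the map $\mathbf{H}\mapsto\ps(\mathbf{H})$ is a lattice embedding, and its restriction to UH classes satisfying $t(x,y)=y$ remains a lattice embedding. The equivalence of the previous paragraph shows that $\mathbf{H}\mapsto\mathbf{H}^\flat$ is the composition of this restricted embedding with the lattice isomorphism provided by the uniform reduct map; hence it is itself a lattice embedding, establishing Fact~\ref{fact:flatembedding}.

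The main technical obstacle lies in the bookkeeping of the second paragraph: one must verify that the enrichment $\mB\mapsto(\mB;\,\rhd:=t^{\mB})$ and its inverse, the $(\mathrsfs{F}\cup\{\wedge\})$-reduct, genuinely commute with the variety-defining class operators. This is routine once one recognises that adding a term-definable operation does not affect the algebra's term structure or its congruences, but it does require careful case-by-case checking.
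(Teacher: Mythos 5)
Your argument is correct and turns on the same pivot as the paper's, namely the term equivalence of $\ps(\mA)$ and $\mA^\flat$ recorded in Remark~\ref{rem:flat}, but the derivations are organized differently. The paper obtains Fact~\ref{fact:flatembedding} by first replacing pointed semidiscriminator extensions with flat extensions in Fact~\ref{fact:subirr} (so that the subdirectly irreducible members of $\mathbf{H}^\flat$ are exactly the flat extensions of members of $\mathbf{H}$) and then re-running the derivation of Fact~\ref{fact:embedding}. You instead transfer Fact~\ref{fact:embedding} wholesale: you note that $\rhd$ coincides with the term operation $t^{\mA^\flat}$ (correctly using that both variables occur in $t$, so strictness forces the value $\infty$ off $A$), that the enrichment $\mB\mapsto(\mB;\,\rhd:=t^{\mB})$ commutes with the operators H, S, P, and hence that $\ps(\mathbf{H})$ and $\mathbf{H}^\flat$ determine each other by enrichment and reduct. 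Your route never revisits subdirect irreducibility, which is a genuine economy; the paper's route has the side benefit of also describing the subdirectly irreducibles of $\mathbf{H}^\flat$, which is the form of the result it actually exploits elsewhere (compare Proposition~\ref{prop:si}). One step you wave at but should spell out: to upgrade your order embedding to a \emph{lattice} embedding into the lattice of all $(\mathrsfs{F}\cup\{\wedge\})$-varieties, you need that the reduct correspondence is a lattice isomorphism between that lattice and the ideal of $(\mathrsfs{F}\cup\{\rhd,\wedge\})$-varieties satisfying $x\rhd y=t(x,y)$, and that meets and joins computed in such an ideal agree with those in the ambient variety lattice. Both claims are standard, so this is bookkeeping rather than a gap, but it is exactly the ``case-by-case checking'' your last paragraph defers.
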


In \cite[p.~115]{Jackson08}, Jackson presents a transformation that, given a UH formula $\Phi$ of signature $\mathrsfs{F}$, produces an identity $\Phi^\rhd$ of signature $\mathrsfs{F}\cup\{\rhd,\wedge\}$. The key property of this transformation is that an $\mathrsfs{F}$-algebra $\mA$ satisfies $\Phi$ if and only if its pointed semidiscriminator extension $\ps(\mA)$ satisfies $\Phi^\rhd$ \cite[Lemma 5.9]{Jackson08}. Applying the transformation $\Phi\mapsto \Phi^\rhd$ to to each formula in a (finite) basis of UH formulas of a UH class $\mathbf{H}$ leads to a (finite) identity basis of the variety $\ps(\mathbf{H})$ \cite[Theorem 5.12]{Jackson08}. For UH classes $\mathbf{H}$ satisfying an identity of the form $t(x,y)=y$ for some term $t(x,y)$, the same procedure yields a (finite) identity basis of the variety $\mathbf{H}^\flat$.

\subsection{Flat extensions of groups as naturally semilattice ordered Clifford semigroups}
\label{subsec:groups}
Sections 7.7 and 7.8 in \cite{Jackson08} and Section 2 in \cite{RJZL23} specialize some general results of \cite{Jackson08} to the case of groups. Since every group has the one-element group as a subgroup, every UH class of groups is a quasivariety. Let $\mathbf{G}$ stand for the variety of all groups considered as $\{\cdot,{}^{-1}\}$-algebras. The identity $xx^{-1}y=y$ holds in $\mathbf{G}$, so Fact~\ref{fact:flatembedding} applies, yielding  a lattice isomorphism from the lattice $L_q(\mathbf{G})$ of all subquasivarieties of $\mathbf{G}$ into the lattice $L(\mathbf{G}^\flat)$ of all subvarieties of the variety $\mathbf{G}^\flat$. A similar application of Fact~\ref{fact:flatembedding}, based on the presence of the identity $x^ny=y$ in any group of exponent dividing $n$, yields the following:

\begin{fact}
\label{fact:expnembedding}
The map\/ $\mathbf{H}\mapsto\mathbf{H}^\flat$ is a lattice isomorphism between the lattice $L_q(\mathbf{G}_n)$ of all quasivarieties of groups of exponent dividing $n$ and the lattice of nontrivial subvarieties of the variety $\mathbf{G}_n^\flat$.
\end{fact}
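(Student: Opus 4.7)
My plan is to specialize Fact~\ref{fact:flatembedding} to $\mathrsfs{F}=\{\cdot,{}^{-1}\}$ and $t(x,y)=x^ny$: since every group of exponent dividing $n$ satisfies $x^ny=y$, the lattice $L_q(\mathbf{G}_n)$ sits as a sublattice inside the lattice of all UH classes of $\{\cdot,{}^{-1}\}$-algebras satisfying $x^ny=y$. Fact~\ref{fact:flatembedding} then gives at once that the restriction of $\flat$ to $L_q(\mathbf{G}_n)$ is a lattice embedding into the lattice of varieties of $\{\cdot,{}^{-1},\wedge\}$-algebras. Its image is contained in $L(\mathbf{G}_n^\flat)$, since $\mathbf{H}\subseteq\mathbf{G}_n$ forces $\mathbf{H}^\flat\subseteq\mathbf{G}_n^\flat$, and consists of nontrivial varieties because every $\mathbf{H}\in L_q(\mathbf{G}_n)$ contains the trivial group $E$, so that $\mathbf{H}^\flat$ contains the two-element algebra $E^\flat$.

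The remaining step is surjectivity onto the nontrivial subvarieties of $\mathbf{G}_n^\flat$. Given such a $\mathbf{V}$, I would set $\mathbf{H}:=\{G\in\mathbf{G}_n\mid G^\flat\in\mathbf{V}\}$ and establish both that $\mathbf{H}$ is a quasivariety and that $\mathbf{V}=\mathbf{H}^\flat$. Closure of $\mathbf{H}$ under isomorphic subgroups, direct products, and ultraproducts follows the blueprint of Lemma~\ref{lem:v2g}. The equality $\mathbf{V}=\mathbf{H}^\flat$ is obtained by combining Birkhoff's Subdirect Decomposition Theorem with Fact~\ref{fact:subirr}: because $\mathbf{G}_n$ satisfies $x^ny=y$, Remark~\ref{rem:flat} unlocks the flat-extension form of Fact~\ref{fact:subirr}, which identifies the subdirectly irreducible members of $\mathbf{V}$ (all of which lie in $\mathbf{G}_n^\flat$) as precisely the flat extensions $G^\flat$ with $G\in\mathbf{H}$; thus $\mathbf{V}=\var\{G^\flat\mid G\in\mathbf{H}\}=\mathbf{H}^\flat$.

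The principal obstacle is the ultraproduct closure of $\mathbf{H}$: flat extensions do not commute with ultraproducts, so given an ultraproduct $G$ of groups $G_i\in\mathbf{H}$ over an ultrafilter $\mathcal{U}$, one must realize $G^\flat$ as a subalgebra of the ultraproduct of the $G_i^\flat$ over $\mathcal{U}$ (the latter lies in $\mathbf{V}$ by closure of $\mathbf{V}$ under ultraproducts). The key observation is that whenever two $I$-tuples represent distinct $\sim_\mathcal{U}$-classes of $G$, they disagree on a set belonging to $\mathcal{U}$, and on that set their coordinatewise $\wedge$ in $G_i^\flat$ returns the zero of $G_i^\flat$; this is exactly the maneuver carried out in the proof of Lemma~\ref{lem:v2g}.
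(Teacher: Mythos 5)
Your proposal is correct and follows the same route the paper takes for this Fact: specializing Fact~\ref{fact:flatembedding} to $\mathrsfs{F}=\{\cdot,{}^{-1}\}$ and $t(x,y)=x^ny$ yields the lattice embedding, exactly as in the paper's one-line derivation. The surjectivity onto the nontrivial subvarieties of $\mathbf{G}_n^\flat$, which the paper leaves to the cited results of Jackson, you supply via Fact~\ref{fact:subirr}, Birkhoff's subdirect decomposition, and the ultraproduct maneuver of Lemma~\ref{lem:v2g} --- precisely the tools the paper itself uses for the analogous step in the proof of Theorem~\ref{thm:main}, so your elaboration is faithful to the paper's argument.
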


Fact~\ref{fact:expnembedding} appears as Theorem 2.1(1) in \cite{RJZL23}, stated without proof. The authors merely note that the result is a restriction of \cite[Corollary 5.4, Theorem 5.12]{Jackson08} and comment on this in a footnote on p. 67. The above discussion essentially elaborates on that footnote.

Section 7.8 in \cite{Jackson08} discusses $\mathbf{G}^\flat$ as a variety of $\{\cdot,{}^{-1},\wedge\}$-algebras.

Recall that every Clifford semigroup $(S;\,\cdot)$ is a union of its subgroups. It is known (and easy to verify) that defining $s^{-1}$ as the inverse of $s$ in a subgroup of $(S;\,\cdot)$ containing $s$ yields a well-defined unary operation on $S$. Equipped with this unary operation, Clifford semigroups form a rather well understood variety of $\{\cdot,{}^{-1}\}$-algebras.

A \emph{naturally semilattice ordered Clifford semigroup} is an algebra $(S;\,\cdot,{}^{-1},\wedge)$ such that $(S;\,\cdot,{}^{-1})$ is a Clifford semigroup, $(S;\,\wedge)$ is a semilattice, $\cdot$ distributes over $\wedge$ on the left and on the right, and the identity
\begin{equation}\label{eq:natorder}
x\wedge y=x(x\wedge y)^{-1}(x\wedge y)
\end{equation}
holds. To explain the name ``naturally semilattice ordered'', recall that in every Clifford semigroup $(S;\,\cdot,{}^{-1})$, the relation
\[
\le_{\mathrm{nat}}:= \{(a,b)\in S\times S\mid a=ba^{-1}a\}
\]
is a partial order, known as the \emph{natural partial order}. The identity \eqref{eq:natorder} expresses the fact that $a\wedge b$ is the greatest lower bound of $a$ and $b$ with respect to $\le_{\mathrm{nat}}$ in $(S;\,\cdot,{}^{-1},\wedge)$; see \cite[Proposition 3.1]{JackStok03}. Thus, the semilattice $(S;\,\wedge)$ also constitutes a semilattice under the natural partial order.

\begin{fact}[{\!\cite[Theorem 7.5)]{Jackson08}}]
\label{fact:natorder}
$\mathbf{G}^\flat$ as a variety of $\{\cdot,{}^{-1},\wedge\}$-algebras is the variety of all naturally semilattice ordered Clifford semigroups.
\end{fact}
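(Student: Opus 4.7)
My plan is to establish both inclusions of the claimed equality between the variety $\mathbf{G}^\flat$ and the variety $\mathbf{N}$ of all naturally semilattice ordered Clifford semigroups.

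For $\mathbf{G}^\flat \subseteq \mathbf{N}$ I would perform a routine direct verification that for every group $G$ the flat extension $G^\flat$ is a naturally semilattice ordered Clifford semigroup. Under $\cdot$ and ${}^{-1}$, the algebra $G^\flat$ is the semilattice of the two groups $G$ and $\{\infty\}$ (the latter being multiplicatively absorbing), hence Clifford; $(G^\flat;\,\wedge)$ is a semilattice by the definition of the flat extension; left/right distributivity of $\cdot$ over $\wedge$ reduces to a short case analysis on whether the inputs coincide or involve $\infty$; and identity \eqref{eq:natorder} reduces to the two cases $x=y$ (both sides equal $x$) and $x \ne y$ (both sides equal $\infty$, which is absorbing). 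Since $\mathbf{N}$ is a variety containing every $G^\flat$, it contains the variety they generate.

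For the reverse inclusion $\mathbf{N} \subseteq \mathbf{G}^\flat$, by Birkhoff's subdirect decomposition theorem it suffices to show that every subdirectly irreducible member of $\mathbf{N}$ is isomorphic to $G^\flat$ for some group $G$. Fix such $(S;\,\cdot,{}^{-1},\wedge)$ and apply Lemma~\ref{lem:clifthm} to decompose $(S;\,\cdot)$ as $\bigcup_{\alpha \in Y} G_\alpha$. The set $E(S)$ is a subalgebra, and since $e^{-1}=e$ on idempotents and the natural partial order restricted to $E(S)$ coincides with the semilattice order on $Y$, the reduct $(E(S);\,\cdot,{}^{-1},\wedge)$ is essentially just the semilattice $(Y;\,\wedge)$ with $\wedge=\cdot$. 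The pivotal step, and the one I expect to be the main obstacle, is an analog of Lemma~\ref{lem:extension}: every semilattice congruence $\rho$ on $Y$ lifts to a congruence on $(S;\,\cdot,{}^{-1},\wedge)$ restricting to $\rho$ on $E(S)$. Mirroring Lemma~\ref{lem:extension}, I would set $a\,\tau\,b$ iff there exists $e\in E(S)$ with $ea=eb$ and $e\,\rho\,aa^{-1}\,\rho\,bb^{-1}$, with $aa^{-1}$ playing the role formerly played by $a^n$. Checking that $\tau$ is an equivalence and respects $\cdot$ and ${}^{-1}$ should proceed in direct analogy; the real difficulty is verifying that $\tau$ respects $\wedge$, because $\wedge$ is not given by a polynomial term in $\cdot$ (as $+$ was, via \eqref{eq:summing}) but only characterized as the natural-order meet. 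I would handle this by rewriting $a\wedge c$ with the help of identity \eqref{eq:natorder} and distributivity, and then transferring the relation using the already-established fact that $\tau$ respects multiplication and inversion.

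Granted the extension lemma, subdirect irreducibility of $S$ forces $(Y;\,\wedge)$ to be a subdirectly irreducible semilattice, hence $|Y|=2$; write $Y=\{\alpha,\beta\}$ with $\beta<\alpha$. I would then show $|G_\beta|=1$: otherwise two distinct $b_1,b_2\in G_\beta$ would be incomparable in the natural order within the group $G_\beta$ (in any group, distinct elements are incomparable in $\le_{\mathrm{nat}}$), and since $\beta$ is the minimum of $Y$ nothing in $S$ lies strictly below $G_\beta$, so the required greatest lower bound $b_1\wedge b_2$ could not exist, contradicting the fact that $\wedge$ is a total operation. Hence $G_\beta=\{0\}$ for a single element $0$ that is multiplicatively absorbing in $S$, and $S=G_\alpha\cup\{0\}$. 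A direct check then shows that any two distinct elements of $S$ have $0$ as their unique common lower bound in $\le_{\mathrm{nat}}$, so $\wedge$ on $S$ agrees with the flat-extension $\wedge$ on $G_\alpha^\flat$ under the obvious bijection, yielding $S\cong G_\alpha^\flat$ and completing the proof.
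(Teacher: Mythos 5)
Your overall strategy is sound and is, in effect, the natural transplantation of the machinery of Sections~\ref{sec:srn}--\ref{sec:mn} to the signature $\{\cdot,{}^{-1},\wedge\}$. Note that the paper does not prove this Fact at all: it imports it from \cite{Jackson08}, where the proof is syntactic (via the transformation $\Phi\mapsto\Phi^\rhd$), so any algebraic argument along your lines is necessarily a different route. Your first inclusion and your endgame (forcing $|Y|=2$, then $|G_\beta|=1$ via triviality of $\le_{\mathrm{nat}}$ inside a group, then identifying $\wedge$ with the flat operation) are fine.

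There is, however, a genuine gap exactly where you flag ``the real difficulty'': you do not actually prove that $\tau$ respects $\wedge$, and the repair you sketch does not work as stated. Identity \eqref{eq:natorder} writes $x\wedge y$ as $x\cdot\bigl((x\wedge y)^{-1}(x\wedge y)\bigr)$, i.e., as $x$ times an idempotent that is itself defined in terms of $x\wedge y$, so ``rewriting $a\wedge c$ with \eqref{eq:natorder} and distributivity'' does not reduce $\wedge$ to $\cdot$ and ${}^{-1}$. What is actually needed is the $\wedge$-analogue of the chain \eqref{new3} $\Rightarrow$ Lemma~\ref{lem:3iden} $\Rightarrow$ \eqref{eq:summing}: first derive $xx^{-1}\wedge yy^{-1}=xx^{-1}yy^{-1}$ (the meet of two idempotents equals their product; this follows from \eqref{eq:natorder} together with the greatest-lower-bound characterization of $\wedge$ cited from \cite{JackStok03}, but it is not automatic), then the identities of Lemma~\ref{lem:m} for $\mathrm{M}(w):=w\wedge ww^{-1}$, and finally $x\wedge y=\mathrm{M}(xy^{-1})\,y$; only with these in hand does the argument of Lemma~\ref{lem:extension} for compatibility with the binary meet go through. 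A second, smaller omission: you claim that the extension lemma alone forces $(Y;\wedge)$ to be subdirectly irreducible, but you also need the analogue of Lemma~\ref{lem:restriction} (a congruence restricting to the diagonal on $E(S)$ is the diagonal), without which the monolith of $S$ could a priori restrict trivially to $E(S)$ and the deduction collapses. Both missing pieces are provable by mimicking the paper's proofs with $aa^{-1}$ in place of $a^n$, but as written the proposal leaves its pivotal steps unestablished.
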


The proof of Fact~\ref{fact:natorder} in \cite{Jackson08} is syntactic and uses the transformation $\Phi\mapsto\Phi^\rhd$ mentioned at the end of the preceding section.

No characterization of the variety $\mathbf{G}_n^\flat$ is available in \cite{Jackson08} or in \cite{RJZL23}. If one follows arguments from the proof of Fact~\ref{fact:natorder}, using the term $x^ny$ instead of $xx^{-1}y$ and writing $+$ in place of $\wedge$, one can identify $\mathbf{G}_n^\flat$ with the variety $\mathbf{N}_n$ of all \ais{}s $(S;\,\cdot,+)$ from $\mathbf{Sr}_n$ that satisfy identities \eqref{eq:center} and
\begin{equation}\label{eq:natordern}
x+y=x(x+y)^n.
\end{equation}

\subsection{Comparison}
\label{subsec:comparison}

Our main result is Theorem~\ref{thm:main}. It:
\begin{itemize}
  \item shows that the lattice $L_q(\mathbf{G}_n)$ of all quasivarieties of groups of exponent dividing $n$ embeds into the lattice $\mathbf{Sr}_n$ as an interval;
  \item identifies the top element $\mathbf{G}_n^\flat$ of this interval as the variety $\mathbf{M}_n$ defined within $\mathbf{Sr}_n$ by identity \eqref{new3}.
\end{itemize}

The first claim coincides with Fact~\ref{fact:expnembedding} and is therefore not new, but our proof differs substantially from those in \cite{Jackson08,RJZL23}. In particular, it is carried out entirely within the framework of \ais{}s and relies only on basic algebraic tools.

The fact that the variety $\mathbf{G}_n^\flat$ can be identified as $\mathbf{M}_n$ is new, to the best of our knowledge. As discussed in Section~\ref{subsec:groups}, no characterization of $\mathbf{G}_n^\flat$ is given in  \cite{RJZL23}, and the characterization of $\mathbf{G}_n^\flat$ as the variety $\mathbf{N}_n$, which could have been extracted from arguments in \cite[ Section 7.8]{Jackson08}, differs from ours.

A posteriori, it is clear that $\mathbf{M}_n=\mathbf{N}_n$ as both varieties coincide with $\mathbf{G}_n^\flat$. This equality can also be established by direct computation with the defining identities of the varieties.

Indeed, showing that $\mathbf{N}_n\subseteq \mathbf{M}_n$ amounts to deriving identity \eqref{new3} from identities \eqref{eq:srn}, \eqref{eq:center}, and \eqref{eq:natordern}. Substituting $x^n$ for $x$ and $y^n$ for $y$ in identity \eqref{eq:natordern}, we obtain
\begin{align*}
x^n+y^n&=x^n(x^n+y^n)^n &&\\
       &=x^n(x^n+y^n)&&\text{since $(x^n+y^n)^2=x^n+y^n$ in view of \eqref{eq:srn} and Lemma~\ref{lem:e(s)}.}
\end{align*}
Swapping $x$ and $y$ and using the commutativity of addition, we similarly deduce
\[
x^n+y^n=y^n(x^n+y^n).
\]
Multiplying the two identities just obtained side by side yields
\begin{equation}\label{eq:product}
(x^n+y^n)^2=x^n(x^n+y^n)y^n(x^n+y^n)=(x^n)^2y^nx^n+(x^n)^2(y^n)^2+x^n(y^n)^2x^n+x^n(y^n)^3.
\end{equation}
In view of \eqref{eq:srn} and Lemma~\ref{lem:e(s)}, we have $(x^n+y^n)^2=x^n+y^n$, $(x^n)^2=x^n$, and $(y^n)^2=y^n$, and \eqref{eq:center} implies $y^nx^n=x^ny^n$. Hence, the left-hand side of \eqref{eq:product} simplifies to $x^n+y^n$ and each summand in the right-hand side of \eqref{eq:product} simplifies to $x^ny^n$. Using the idempotency of addition, we obtain $x^n+y^n=x^ny^n$, that is, the required identity \eqref{new3}.

Conversely, to show that $\mathbf{M}_n\subseteq \mathbf{N}_n$, one has to derive identities \eqref{eq:center} and \eqref{eq:natordern} from identity \eqref{new3}. That \eqref{eq:center} follows from \eqref{new3} was shown in Lemma~\ref{lem:3iden}. Using Lemma~\ref{lem:3iden}, we also obtain
\[
x+y=y+x\stackrel{\eqref{eq:summing}}{=}\mathrm{M}(yx^{n-1})x.
\]
To lighten notation, we let $e:=\mathrm{M}(yx^{n-1})$ so that the identity just established becomes $x+y=ex$. By Lemma~\ref{lem:m}, we have $e^2=e$. Multiplying both sides of $x+y=ex$ by $e$ on the left yields
\begin{equation}\label{eq:e(x+y)}
e(x+y)=e^2x=ex=x+y.
\end{equation}
By \eqref{eq:center}, we have $ex=xe$ whence $x+y=xe$. Now we compute
\begin{align*}
x+y&=(x+y)^{n+1}&&\text{by \eqref{eq:srn}}\\
   &=(x+y)(x+y)^n&&\\
   &=xe(x+y)^n&&\text{since $x+y=xe$}\\
   &=x(x+y)^n&&\text{by \eqref{eq:e(x+y)}},
\end{align*}
thus obtaining the required identity \eqref{eq:natordern}.

Returning to the relationship between the results of \cite{Jackson08} and those of the present paper, we note that \textbf{after} the equality $\mathbf{M}_n=\mathbf{N}_n=\mathbf{G}_n^\flat$ is established, our classification of the subdirectly irreducible members of $\mathbf{M}_n$ (Proposition~\ref{prop:si}) becomes a special instance of Fact~\ref{fact:subirr}. We believe, however, that our direct proofs in Section~\ref{sec:mn}, employing only elementary tools, still offer independent value and have the potential to be extended to other subvarieties of the variety $\mathbf{Sr}_n$.

\subsection*{Acknowledgements} The authors are indebted to Marcel Jackson for stimulating discussions over the years, and to Alexander Budkin for providing up-to-date information on lattices of group quasivarieties.


\begin{thebibliography}{99}
\bibitem{Budkin:2002}
Budkin, A.I.: Quasivarieties of Groups. Altai State University, Barnaul (2002). (In Russian)

\bibitem{BuGo75}
Budkin, A.I., Gorbunov, V.A.: On the theory of quasivarieties of algebraic systems. Algebra i Logika \textbf{14}(2), 123--142 (1975). (In Russian)

\bibitem{BuSa81}
Burris, S., Sankappanavar, H.P.: A Course in Universal Algebra. Springer, Berlin, Heidelberg, New York (1981)

\bibitem{Clifford&Preston:1961}
Clifford A.H., Preston G.B.: The Algebraic Theory of Semigroups, Vol. I.  (Mathematical Survey No 7)  American Mathematical Society, Providence (1961)

\bibitem{Fed}
F\"edorov A.N.: Quasi-identities of finite 2-nilpotent groups. Manuscript deposited at VINITI (All-Union Institute for Scientific and Technical Information), no. 5489-B87 (1987). (In Russian)

\bibitem{GPZ05}
Ghosh, S., Pastijn, F., Zhao, X.Z.: Varieties generated by ordered bands. I. {Order} \textbf{22}, 109--128 (2005)

\bibitem{Gr:2011}
Gr\"atzer, G.: Lattice Theory: Foundation. Birkh\"auser, Basel (2011)

\bibitem{Guna98}
Gunawardena, J. (ed.): Idempotency. Publications of the Newton Institute, vol. 11. Cambridge University Press, Cambridge (1998)

\bibitem{Hall:1959}
Hall, M. Jr.: The Theory of Groups. The Macmillan Company, New York (1959)

\bibitem{Howie:1995}
Howie J.M.: Fundamentals of Semigroup Theory. Clarendon Press, Oxford (1995)

\bibitem{Jackson08}
Jackson, M.: Flat algebras and the translation of universal Horn logic to equational logic. J. Symb. Log. \textbf{73}(1), 90--128 (2008)

\bibitem{Jackson19}
Jackson, M.: Low growth equational complexity. Proc. Edinburgh Math. Soc. \textbf{62}(1), 197--210 (2019)

\bibitem{JackRenZhao22}
Jackson, M., Ren, M.M., Zhao, X.Z.: Nonfinitely based ai-semirings with finitely based semigroup reducts. J. Algebra \textbf{611}, 211--245 (2022)

\bibitem{JackStok03}
Jackson, M., Stokes, T.:  Agreeable semigroups. J. Algebra \textbf{266}, 393--417 (2003)

\bibitem{LiMa:2005}
Litvinov, G.L., Maslov, V.P. (eds.): Idempotent Mathematics and Mathematical Physics. Contemporary Mathematics, vol. 377. American Mathematical Society, Providence, RI (2005)

\bibitem{Pas90}
Pastijn, F.: The lattice of completely regular semigroup varieties. J. Austral. Math. Soc. Ser. A \textbf{49}, 24--42 (1990)

\bibitem{Pas91}
Pastijn, F.: Commuting fully invariant congruences on free completely regular semigroups. Trans. Amer. Math. Soc. \textbf{323}, 79--92 (1991)

\bibitem{Pas05}
Pastijn, F.: Varieties generated by ordered bands. II. {Order} \textbf{22}, 129--143 (2005)

\bibitem{PZ00}
Pastijn, F., Zhao, X.Z.: Green's $\mathcal{D}$-relation for the multiplicative reduct of an idempotent semiring. Arch. Math. (Brno) \textbf{36}, 77--93 (2000)

\bibitem{PR90}
Petrich, M., Reilly, N.R.: The modularity of the lattice of varieties of completely regular semigroups and related representations. Glasgow Math. J. \textbf{32}, 137--152 (1990)

\bibitem{Polin80}
Polin, S.V.: Minimal varieties of semirings, Math. Notes \textbf{27}(4), 259--264 (1980)

\bibitem{RJZL23}
Ren, M.M., Jackson, M., Zhao, X.Z., Lei, D.L.: Flat extensions of groups and limit varieties of additively idempotent semirings. J. Algebra \textbf{623}, 64--85 (2023)

\bibitem{RZS16}
Ren, M.M., Zhao, X.Z., Shao, Y.: On a variety of Burnside ai-semirings satisfying $x^{n}\approx x$. {Semigroup Forum} \textbf{93}(3), 501--515 (2016)

\bibitem{RZV22}
Ren, M.M., Zhao, X.Z., Volkov, M.V.: The Burnside ai-semiring variety defined by $x^{n}\approx x$. Preprint	arXiv:2207.05490 (2022). \url{https://doi.org/10.48550/arXiv.2207.05490}

\bibitem{RZW17}
Ren, M.M., Zhao, X.Z., Wang, A.F.: On the varieties of ai-semirings satisfying $x^{3}\approx x$. {Algebra Universalis} \textbf{77}, 395--408 (2017)

\bibitem{Sapir84}
Sapir, M.V.: Varieties with a countable number of subquasivarieties. Siberian Math. J. \textbf{25}(3), 461--473 (1984)

\bibitem{WuRenZhao}
Wu, Y.N, Ren, M.M., Zhao, X.Z.: The additively idempotent semiring $S_7^0$ is nonfinitely based. Semigroup Forum \textbf{108}, 479--487 (2024)

\bibitem{WuZhaoRen23}
Wu, Y.N., Zhao, X.Z., Ren, M.M.:  On varieties of flat nil-semirings. Semigroup Forum \textbf{106}, 271--284 (2023)
\end{thebibliography}
\end{document}